\newcommand{\SetFigFont}[2]{}
\renewcommand\NAT@citesuper[3]{\ifNAT@swa
\if*#2*\else#2\NAT@spacechar\fi
\unskip\kern\p@\textsuperscript{\NAT@@open#1\if*#3*\else,\NAT@spacechar#3\fi\NAT@@close}%
   \else #1\fi\endgroup}
\providecommand{\keywords}[1]{\textbf{Keywords} #1}
\newtheorem{lemma}{Lemma}[section]
\newtheorem{remark}[lemma]{Remark}
\newtheorem*{acknowledgement}{Acknowledgement}
\newcommand{\Dt}{\partial_{t}\space}
\newcommand{\vr}{\hat{v}\space}
\newcommand{\Tr}{\hat{T}\space}
\newcommand{\Fr}{\hat{F}\space}
\newcommand{\Fd}{\hat{\Omega}\space}
\newcommand{\pr}{\hat{p}\space}
\newcommand{\phir}{\hat{\phi}\space}
\newcommand{\Jr}{\hat{J}\space}
\newcommand{\sigmar}{\hat{\sigma}\space}
\newcommand{\nablar}{\hat{\nabla}\space}
\newcommand{\fr}{\hat{f}\space}
\newcommand{\xhir}{\hat{\xi}\space}
\begin{document}

\title{Finite element simulation of fluid dynamics and 
CO$_2$ gas exchange in the alveolar sacs of the human lung}

\author{Luis J. Caucha\thanks{Universidad Nacional de Tumbes, Peru 
(ljcaucham@untumbes.edu.pe)} \and
Stefan Frei\thanks{Department of Mathematics, University College London, UK, (s.frei@ucl.ac.uk)}
\and Obidio Rubio\thanks{Universidad Nacional de Trujillo, Peru (orubio@unitru.edu.pe)}}

\date{}
\maketitle

\abstract{In this article we present a numerical framework based on continuum models for the fluid dynamics
and the CO$_2$ gas distribution in the alveolar sacs of the human lung during expiration and inspiration, 
including the gas exchange
to the cardiovascular system. We include the expansion and contraction of the geometry
by means of the Arbitrary Lagrangian Eulerian (ALE) method. For discretisation, we use
equal-order finite elements in combination with pressure-stabilisation techniques
based on local projections or interior penalties. We derive formulations
for both techniques that are suitable on arbitrarily anisotropic meshes. These formulations
are novel within the ALE method. 
Moreover, we investigate the effect of different boundary conditions, that vary between inspiration 
and expiration.
We present numerical results on a
simplified two-dimensional alveolar sac geometry and investigate 
the influence of the pressure stabilisations as well as the boundary conditions.\\}

\keywords{Alveolar gas dynamics - Arbitrary Lagrangian-Eulerian (ALE) method - Artificial boundary conditions -
Anisotropic pressure stabilisation -
 Local projection stabilisation - Interior penalty stabilisation}

\section{Introduction}\label{sec:Intro}

 The principal tasks of the human respiratory system are the oxygen uptake and the release
 of carbon dioxide. The exchange of both with the cardiovascular system takes place in the distal zone
 of the human lung, in tiny grape-like structures called alveoli. These are connected to the
 cardiovascular system by the alveolar-capillary membranes through which the carbon dioxide may enter. 
 The alveoli are bundled in alveolar sacs to which the alveolar ducts open distally.
 
 Understanding the dynamics of gases in the alveolar sacs is important to investigate several pathologies
 such as pulmonary emphysema\cite{Parya2016}. Here, foreign substances such as tobacco accumulate in the alveoli
 and cause a partial or full blockage of the membranes. Thus, the alveolar sacs fill up with air, 
 expand and may break.
 
 Today's standard model of the human lung has been proposed by Weibel \cite{Weibel1963}.
 Weibel's  model splits the lung into 23 sub-regions, so called \textit{generations},
 which represent the number of bifurcations 
 starting from the trachea ($0^{\text{th}}$ generation) to the bronchial tree ($23^{\text{rd}}$ generation). 
 Alveoli appear in the bronchial tree between the $15^{\text{th}}$ generation to $23^{\text{rd}}$ generation.
 While the upper airways have a very stiff cartilage structure, the lower airway walls undergo considerable 
 deformations during inspiration and expiration. 
  
A first study of gas distribution in the human lung was presented by 
Milic-Emili et al. \cite{MilicEmili1966} who studied the distribution of Xenon (Xe$^{133}$).
Models to simulate the convective flow in the acinus have gained interest starting
 from investigations about gas mixture and exchange~\cite{Engel1983, Paiva1985}. First simulations
 were made considering ideal conditions in the alveolar zone and a homogeneous gas concentration 
 in each respiration~\cite{Scrimshire1973, Felici2003}.
Federspiel \& Fredberg~\cite{Federspiel1988} made an axial gas dispersion analysis in a model of respiratory
 bronchioles and alveolar ducts, using the Navier-Stokes equations and a convection-diffusion equation for the
 gas transport. 
 
 Later on, different mathematical and numerical models have been developed specifically for the fluid 
 and particle dynamics within the alveolar sacs. 
 Gefen et al.~\cite{Gefen1999} used the finite element method to simulate the stress distribution on the parenchymal
 micro-structure. Suresh et al.~\cite{Sureshetal2005} studied the CO$_2$ gas concentration and gas exchange in one single alveoli,
 using a finite volume method approach with simplified geometries and fluid models.
 
 On the alveolar scale many works are based on kinetic models instead of continuum ones. This is justified, especially if the dynamics of
 larger particles are considered, as typical sizes of the alveolar sacs are roughly between $0.1$ and $0.2mm$~\cite{Sznitman2013}.
 Tsuda et al.~\cite{Tsudaetal1994} used
Monte-Carlo methods to study the diffusional deposition of aerosols in the alveolar ducts. Further improvements towards a realistic simulation
were made by Darquenne and co-workers~\cite{DarquennePaiva1996},\cite{Darquenne2001} and by
Kumar et al.~\cite{Kumaretal2009} using a continuum 
description for the flow part, including simulation results on representative three-dimensional geometries.

All the previously mentioned investigations were made on a fixed domain. During expiration and inspiration, however,
the alveoli grow and shrink considerably (see e.g.\cite{Sznitman2013, Esra2011} and the references cited therein). 
First results on moving domains with a prescribed wall motion 
were given by Henry et al.~\cite{Henryetal2002} and by Haber and 
co-workers~\cite{Haberetal2000, Haberetal2003} using 
kinetic models to study the motion and deposition of particles. Li \& Kleinstreuer~\cite{LiKleinstreuer2011} used a Lattice-Boltzmann 
method to simulate the fluid flow in the alveolar region.
In all these works a strong dependence on the wall motion was found.

Recently, Darquenne and co-workers studied a three-dimensional flow model including their movement and a kinetic model for the particle
flow and deposition~\cite{Darquenneetal2009}. Moreover, we would like to mention the work of Dailey \& Ghadali, who made a fluid-structure-interaction simulation
of the interaction of the airflow inside the alveoli with the surrounding tissue~\cite{DaileyGhadali}, including a variable deformation of the alveolar walls in between. 
The main challenge is to model the surrounding tissue, which might be very heterogeneous and little is known on its elastic properties. The authors 
use a viscoelastic model in combination with an applied exterior
oscillatory pressure and study the effect of different elasticity 
parameters.
Finally we would like to remark that a detailed review with further developments and details has been published by Sznitman~\cite{Sznitman2013}.

In this article, we present a numerical framework to simulate
gas exchange and gas transport in a moving alveolar sac based purely on continuum models. 
We include the effect 
of moving boundaries by imposing the domain movement, as 
the deformation is caused by
the movement of the surrounding diaphragm, which can be 
observed in reality \cite{Wade1954,Gilbert1981}.
Specifically, we are interested in the distribution
of CO$_2$ in the alveoli. The specific gas, however, is exemplary and our framework can be used in a 
straight-forward way to compute the distributions of other substances.

The convective flow in the alveolar sacs is governed by the incompressible Navier-Stokes equations. Gas transport is
described by a convection-diffusion equation. We formulate both equations on a moving domain first and use the 
\textit{Arbitrary Lagrangian Eulerian} (ALE) method~\cite{Donea1977, HuLiZi81} to transform the equations to a fixed domain.
The ALE method can be considered as a standard approach for flow problems on moving
domains.

One particular difficulty for the numerical simulation is the in- and outflow condition for the gas concentration. 
A classical approach is to assume a Dirichlet condition during inflow (inspiration) and a homogeneous Neumann
condition during outflow (expiration). On the alveolar scale the implementation of a Dirichlet condition during inflow is however not straight-forward,
as the exterior gas concentration depends on the outflow during expiration and is not known \textit{a priori}. As an alternative, we
introduce a second type of boundary conditions, based on so-called artificial or transparent boundary conditions~\cite{Ehrhardt, Halpern}.
In both approaches, the boundary conditions are incorporated weakly in the
variational formulation, which avoids a change of the number of unknowns and
structure of the system matrix from one time step to another.

For discretisation, we use equal-order finite elements in combination with pressure stabilisation.
Among the different possibilities for pressure stabilisation, we study two related approaches,
that are frequently used in the literature, namely the \textit{Local Projection Stabilisation} (LPS) technique~\cite{BeckerBraack2001} and
an \textit{interior penalty} (ip) method~\cite{BurmanFernandezHansbo2006}. 
It is well-known that both methods are related, in the sense that if the ip stabilisation is used 
on interior edges of patch grids only, both stabilisation terms are upper- and lower-bounded by each other.
For an accurate and efficient simulation, we will use 
anisotropic mesh cells. Due to the movement of the alveolar sacs during inspiration and expiration these may become even more anisotropic. 
To cope with these difficulties, we derive anisotropic variants of the stabilisation techniques that are 
novel in the context of the ALE approach.

The purpose of this work is the introduction and investigation of a model 
and a numerical framework that can be applied 
for the simulation on arbitrary alveolar geometries.
As the shape of 
the alveolar sacs is very complex, and moreover it varies between different generations,
we concentrate in a first step on two-dimensional model geometries in the numerical results. This allows us
to focus on modelling and discretisation issues, which is the main scope of this paper.
Both the model and the numerical framework have natural generalisation to three space dimensions. 

The structure of this article is as follows: First, in Section~\ref{sec:model}, we derive the governing equations and boundary
conditions for the fluid flow and the gas concentration. Furthermore, we transform the corresponding variational formulation to a fixed
reference domain using the ALE approach. In Section~\ref{sec:disc}, we introduce a finite element discretisation and describe the
pressure stabilisation techniques, especially on moving domains. We present numerical results in Section~\ref{sec:num}, where we
compare the introduced pressure stabilisation techniques and analyse in particular the effect of boundary conditions. We conclude in Section~\ref{sec:conclusion}. 
 
\section{Mathematical model}
\label{sec:model}

\begin{figure}
\centering
 \includegraphics[width=7.5cm]{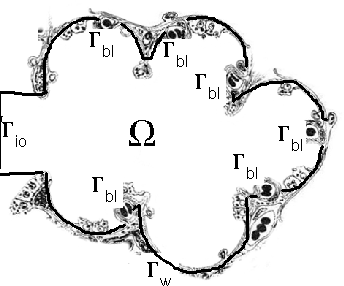}
 \caption{ Scheme of an alveolar sac 
 \label{fig:sac}. The small parts $\Gamma_{bl}$ are 
 the connections to a blood vessel.}
\end{figure}

We are interested in the fluid dynamics and gas exchange in an alveolar sac that is connected to 
the cardiovascular system
(see Figure~\ref{fig:sac}). During inspiration and expiration, the alveolar 
sacs grow and shrink. Thus, the computational
domain $\Omega(t)\subset \mathbb{R}^d$ ($d=2,3$) changes with time. 
We denote the domain velocity by $v^{\text{dom}}$. 

The fluid dynamics in the alveolar sacs are governed by the incompressible Navier-Stokes equations
\begin{align}
      \rho \partial_t v + \rho (v\cdot\nabla) v - \operatorname{div} \sigma
       &=0 \quad \text{in } \Omega(t),&\;\; v&=v^{\text{dom}} \;\;\text{on } \Gamma_{bl}(t) \cup \Gamma_w(t),\label{domeq} \\
       \operatorname{div} v &=0 \;\; \text{in } \Omega(t),  &\sigma n &= 0 \qquad\, \text{on } \Gamma_{io}.
\end{align}
where the Cauchy stress tensor is given by
\begin{align*}
 \sigma = \rho \nu \nabla v - p I,
\end{align*}
$\rho$ and $\nu$ denote the fluid's density and viscosity 
and $v$ and $p$ denote velocity and pressure. At $\Gamma_{io}$, the alveolar sac is connected to the rest of the respiratory system, at
$\Gamma_{bl}(t)$ it is connected to a blood vessel by a semi-permeable membrane through which substances like oxide and carbon dioxide may pass.
The remaining, impermeable boundary of the alveolar sac is denoted by $\Gamma_w(t)$.

As explained in the introduction, alveolar sacs can be found only in the lower part of the 
human lung, between 
the $15^{th}$ and the $23^{rd}$ generation of Weibel's model. 
We consider the density and viscosity constant for all generations $\rho=1.21 \mu g/mm^{3}$
and $\nu\approx14.711 mm^2/s$~\cite{Kvale1975}. The size of the alveolar sacs varies from radial 
distances of $r\approx0.055$ mm in
the $15^{th}$ generation to $r\approx 0.1125$ mm in the $23^{rd}$ generation~\cite{Sznitman2013}.
Considering velocities $V\approx 10^{-2}mm/s$, the Reynolds number $Re=V r/\nu$ ranges
from $Re\approx 3.7\cdot 10^{-5}$ to 
$Re\approx 7.6\cdot 10^{-5}$.

\begin{remark}
 The alveolar sacs move mainly due to a movement of the diaphragm and breasts, which causes a
 low pressure in the surroundings of the alveolar sacs. In the present work, we do not consider the exterior space,
 as models and data for this region are unknown. Instead we consider the movement of the outer boundary of the alveolar sac 
 as known (as it can be observed by experiments, at least at a larger scale) and model in- and outflow on 
 $\Gamma_{io}$ by a so-called do-nothing condition. An alternative approach would be to assume
 the in- and outflow velocity on $\Gamma_{io}$ as known (from experiments) and include the domain movement as a 
 further variable. As the movement of the alveolar sacs is mainly caused by exterior forces, 
 however, we consider the first
 approach as the more natural one.
\end{remark}

\noindent For the concentration of CO$_2$, we assume a convection-diffusion equation 
\begin{align*}
 \partial_t c + v\cdot \nabla c - D \Delta c = 0 \quad\text{in } \; \Omega(t).
\end{align*}
Due to the high permeability of the membrane to the cardiovascular system, the concentrations of CO$_2$ on both sides of the membrane
are often considered to be equal (see e.g. Suresh et al.~\cite{Sureshetal2005}), if one considers only the dissolved portion $c_{bl}$ of CO$_2$ in the blood.
This leads to a Dirichlet condition on $\Gamma_{bl}(t)$ for $c$. The non-diffusivity of the boundary $\Gamma_w$
is described by a homogeneous Neumann condition
\begin{align*}
 c=c_{bl} \quad \text{on } \Gamma_{bl}(t),\qquad
 \partial_n c = 0\quad \text{on } \Gamma_{w}(t).
\end{align*}
Alternatively, a Robin boundary condition $D\partial_n c = \alpha (c_{bl} - c)$ with a high permeability $\alpha$ could be used on $\Gamma_{bl}(t)$.
As values for the constants, we use $D=17\, mm^2 /s$ (\cite{Scherer1988}) 
for the diffusivity and
$c_{bl}=0.06 $ for the CO$_2$ concentrations in the blood~\cite{Hlastala1972}. 
Depending on the different sizes 
of the alveolar sacs in different generations, this corresponds to P\'eclet numbers $Pe= V r / D$ 
ranging from $Pe\approx 3.2\cdot 10^{-5}$ to $Pe\approx 6.6\cdot 10^{-5}$.

On the in- and outflow boundary $\Gamma_{io}$, a classical approach is to impose a homogeneous
Neumann condition in the case of outflow and a Dirichlet condition in the case of inflow, see e.g.~\cite{Scherer1988}.
Denoting by $c_{\text{ext}}$ the concentration of CO$_2$ in the alveolar duct outside, this reads
\begin{align}\label{DirichletNeumann}
 c&=c_{\text{ext}} \quad \text{if } v\cdot n <0,\quad
 \partial_n c = 0 \quad \text{if } v\cdot n \geq0 \quad \text{on } \Gamma_{io}.
\end{align}
For ease of implementation, we include the Dirichlet condition weakly in the 
variational formulation by means of
Nitsche's method~\cite{Nitsche70}. 
Defining the Heaviside function ${\cal H}$ by
\begin{align*}
 {\cal H}(x)= \Bigg\{\begin{split} 1,\quad x&>0,\\
                     0,\quad x&\le 0,
                    \end{split}
\end{align*}
the variational formulation of the complete system of equations reads:
\textit{Find $v\in v^{\text{dom}} + {\cal V}, p \in {\cal L}, c \in c_{bl} + {\cal X}$ such that}
\begin{eqnarray}
\begin{aligned}
(\rho \partial_t v + \rho (v\cdot\nabla) v,\phi) + (\sigma,\nabla \phi)
       &=0 \quad \forall \phi \in {\cal V},\\
       (\operatorname{div} v,\xi) &=0 \quad \forall \xi \in {\cal L},\\
(\partial_t c + v\cdot \nabla c,\psi) + (D \nabla c,\nabla \psi)  \qquad\qquad\qquad&\\
+ \left({\cal H}(-v\cdot n) (\gamma(c-c_{\text{ext}}) -D\partial_n c),\psi \right)_{\Gamma_{io}} &= 0 \quad \forall \psi \in {\cal X}. 
\label{completeSystemEulerian} 
\end{aligned}
\end{eqnarray}
The trial and test spaces are given by
\begin{align}
 {\cal V} &= \left(H^1_0(\Omega(t); \partial \Omega(t) \setminus \Gamma_{io})\right)^2,
 &{\cal L} = L^2(\Omega(t)),\qquad
 {\cal X} &= H^1_0(\Omega(t);\Gamma_{bl}(t)).\label{spaces}
 \end{align}
 We would like to remark that in order to formulate the boundary terms for the convection-diffusion equation, we have to assume
 additional regularity for $c \in {\cal X}$. Here, we have incorporated the Nitsche boundary terms already in the continuous
 formulation in order to simplify the presentation. The well-posedness for this variational formulation can however only be shown on the
 discrete level.
In the discrete setting the Nitsche parameter will be chosen as $\gamma=\gamma_0 h_n^{-1}$, 
where $h_n$ denotes the cell size in normal direction.

The practical problem of this formulation is the choice of the external concentration $c_{\text{ext}}$
in \eqref{DirichletNeumann}. While the average concentration in the interior of the lung could be considered as known
($c_{\text{ext}}\approx 0.04302$~\cite{Hlastala1972}), this value is not necessarily a good approximation for the concentration in 
the alveolar duct right before the alveolar sac. The concentration might be much higher here, as our numerical results will indicate, 
especially immediately after the expiration period, when a large concentration of CO$_2$ has just left through $\Gamma_{\text{io}}$.

Therefore, we will study a different boundary condition in this work as well. The approach is based on the works of Halpern~\cite{Halpern} and 
Loh\'eac~\cite{Loheac} (see also Ehrhardt~\cite{Ehrhardt}), who derived exact ``transparent'' or ``artificial'' boundary conditions for convection-diffusion
equations for the case that an infinite domain is cut at a certain line. Unfortunately, the exact boundary conditions are non-local in time, i.e.$\,$
depending on the concentration $c$ in the whole previous time interval $(0,t)$. 
Therefore, we propose an approximation of these conditions, similarly to the approximations considered by Halpern~\cite{Halpern}:
\begin{align*}
 \partial_t c + D\partial_n c +  [-v\cdot n]_+ (c-c_{\text{ext}}) &= 0,
\end{align*}
where $[f]_+ := \max\{f,0\}$. Note that the transport term appears only in the case of inflow ($v\cdot n< 0$).
Similar conditions have been used by Scherer et al.~\cite{Scherer1988}, who have neglected the transport terms for simplicity.
In the stationary case, the analogous conditions (without the term $\partial_t c$) are known as Danckwerts boundary conditions~\cite{Danckwerts1953}.
The most significant difference compared to the classical boundary conditions \eqref{DirichletNeumann} is the fact that diffusion, which is 
the most important part in this application, is also 
considered during the inflow period, while the classical boundary conditions consider only the exterior concentration $c_{\text{ext}}$ during the 
inflow.

The variational formulation for the concentration $c$ reads
\begin{align}
(\partial_t c + v\cdot \nabla c,\psi) + (D \nabla c,\nabla \psi) 
+ \left(\partial_t c + [-v\cdot n]_+ (c-c_{\text{ext}}),\psi \right)_{\Gamma_{io}} &= 0 \quad \forall \psi \in {\cal X}. 
\label{completeSystemArtf} 
\end{align}
 
The system of equations is complemented with initial conditions for velocity and
concentration $v(\cdot,0) = v^0$, $c(\cdot,0)=c^0$.

\begin{remark}{(Reduced stress tensor)}
Due to the incompressibility condition, we could use the reduced Cauchy 
stress tensor $\sigma$ in equation (\ref{domeq}) instead of the 
full symmetric Cauchy stress tensor
\begin{align}\label{fullSymStress}
 \sigma^{\text{sym}} = \rho \nu (\nabla v + \nabla v^T) - p I.
\end{align}
Using the reduced stress tensor $\sigma$ the \textit{do-nothing}
condition $\sigma n=\nu \partial_n v - pn = 0$ on $\Gamma_{io}$ is included by 
means of variational principles in 
(\ref{completeSystemEulerian}). This is an appropriate boundary condition for 
a flow field, when the channel is cut, but the flow continues uniformly, as it is the case 
in our application, see Heywood, Rannacher \& Turek~\cite{HeywoodRannacherTurek}.
In contrast to the condition $\sigma^{\text{sym}} n = 0$, the \textit{do-nothing} conserves for 
example Poiseuille flow.
\end{remark}

\begin{remark}{(Well-posedness)}
It is well-known that the non-stationary Navier-Stokes equations with homogeneous Dirichlet boundary conditions 
on a fixed domain $\Omega$
possess a global unique solution in 
2 dimensions, while in 3 space dimensions such a result has only been shown for small initial 
data or locally in time (see e.g.~\cite{Temam}). These results have 
been extended in~\cite{HeywoodRannacherTurek} to include \textit{do-nothing} boundary conditions, where global (in time) existence
is proven for small initial data and local existence for larger initial data, in both two and three
space dimensions. Note that this is in contrast to the stationary 
case, where the Navier-Stokes system might not be well-posed, if flow 
enters through the boundary $\Gamma_{io}$ (see e.g. Arndt, Braack \& Lube~\cite{ArndtBraackLube}).

The well-posedness on moving domains $\Omega(t)$ 
can be shown under similar conditions for a sufficiently regular domain movement
(see e.g.~\cite{DesjardinsEsteban, DziriZolesio, RichterBuch}). For well-posedness 
results for a convection-diffusion
 equation on a moving domain with standard boundary conditions, we refer to~\cite{DziriZolesio, Cortez2013}). As discussed previously,
 the well-posedness of the Nitsche formulation is typically shown on the discrete level only. Using the 
 boundary terms \eqref{completeSystemArtf}, the well-posedness can be shown using the techniques from Halpern~\cite{Halpern}. 
The combination of both sub-systems does not pose any additional difficulties, as the coupling between 
them is unidirectional, i.e.$\,$there is no 
feedback from the concentration $c$ to the flow variables $v$ and $p$.
\end{remark}

\subsection{Arbitrary Lagrangian Eulerian method}

To cope with the domain movement, we use the \textit{Arbitrary Lagrangian Eulerian method} 
(ALE) \cite{Donea1977, Belytschko80, HuLiZi81, DoneaSurvey}.
The basic idea of the ALE method is to formulate an equivalent system of equations 
on a fixed reference domain $\hat\Omega$.
Therefore, we introduce a bijective map
\begin{align*}
 \hat T: \hat\Omega \to \Omega(t)
\end{align*}
from the reference domain to the current domain. We assume for simplicity, that 
the in- and outflow boundary $\Gamma_{io}$ remains fixed under $\Tr$.

Quantities $\hat f$ defined in 
the reference systems are related to quantities 
$f$ in the current system by the relation
\begin{align*}
 \hat{f}(\hat{x},t) = f(x,t), \quad x=\hat{T}(\hat{x},t). \label{ALEEuler}
\end{align*}
By $\Fr=\nablar\Tr$ we denote the gradient of $\Tr$ and by $\Jr = \text{det}\, \Fr$ its determinant.
Given sufficient regularity of the mapping $\hat T$ and its inverse $\hat{T}^{-1}$, the system (\ref{completeSystemEulerian}) 
is equivalent to the following system formulated on the fixed domain $\hat\Omega$: \\
\textit{Find $\vr\in \vr^{\text{dom}} + {\hat{\cal V}}, \pr \in \hat{\cal L}, \hat{c} \in \hat{c}_{bl} + \hat{\cal X}$ such that}
\begin{align*}
\rho(\Jr(\Dt\vr+\nablar\vr \Fr^{-1}(\vr-\Dt\Tr),\phir)_{\Fd}+(\Jr\sigmar\Fr^{-T},\nablar\phir)_{\Fd}&=0 \quad\forall \hat\phi \in \hat{\cal V},\\
(\hat{\rm div}(\Jr\Fr^{-1}\vr),\xhir)_{\Fd}&=0 \quad\forall \xhir \in \hat{\cal L},\\
(\Jr(\Dt \hat{c} +(\vr-\Dt\Tr)^T \Fr^{-T} \nablar \hat{c},\hat\phi)_{\Fd}+(D\Jr\Fr^{-T}\nablar \hat{c},\Fr^{-T}\nablar \hat\psi)_{\Fd} 
 \qquad\\
+ \left( {\cal H}(-\vr\cdot \hat{F}^{-T}\hat{n}) (\gamma (\hat{c} -\hat{c}_{\text{ext}}) 
-D \frac{\Fr^{-T}\hat{n}}{\|\Fr^{-T}\hat{n}\|}\cdot \hat{F}^{-T} \hat{\nabla} \hat{c}),\hat\psi\right)_{\hat{\Gamma}_{io}} &= 0 \quad \forall \hat\psi \in \hat{\cal X}.\\
\end{align*}
For details on the transformation rules, see e.g.~\cite{DunnePhD, RichterBuch}. The trial and test spaces are defined analogously to (\ref{spaces}) by 
replacing the domains $\Omega(t)$ by $\hat{\Omega}$ and the boundary part $\Gamma_{bl}(t)$ by $\hat{\Gamma}_{bl}$.
The Cauchy stress tensor is given by
\begin{equation*}
\sigmar:=-\pr I +\rho \nu(\nablar \vr \Fr^{-1}).
\end{equation*}

The variational formulation for the gas concentration with artificial boundary conditions\eqref{completeSystemArtf} transforms to
\begin{align*}
(\Jr(\Dt \hat{c} +(\vr-\Dt\Tr)^T \Fr^{-T} \nablar \hat{c},\hat\phi)_{\Fd}+(D\Jr\Fr^{-T}\nablar \hat{c},\Fr^{-T}\nablar \hat\psi)_{\Fd} 
 \qquad\\
+\left(\partial_t \hat{c} + [-\vr\cdot \Fr^{-T}\hat{n}]_+ \hat{c}-c_{\text{ext}},\hat\psi \right)_{\hat{\Gamma}_{io}} &= 0 \quad \forall \hat\psi \in \hat{\cal X}.
\end{align*}

\section{Discretisation and stabilisation}
\label{sec:disc}

For discretisation, we use $Q_1$ equal-order elements for velocity, pressure and gas concentration
on a mesh $\hat{\Omega}_h$. For simplicity, the discretisation and stabilisation 
is presented here for the two-dimensional case. The generalisations to three space dimensions are however 
straight-forward.
As the discrete \textit{inf-sup} condition is violated, we add stability terms to
guarantee the well-posedness of the fluid equations. The incompressibility condition is modified to
\begin{align}\label{modIncompr}
(\hat{\rm div}(\Jr\Fr^{-1}\hat{v}_h),\hat{\xi}_h)_{\hat{\Omega}_h} + S_h(\hat{p}_h, \hat{\xi}_h) &=0 \quad\forall \hat{\xi}_h \in \hat{\cal L}_h.
\end{align}

Here we study two frequently used and related approaches for
pressure stabilisation.
As first approach, we use the \textit{Local Projection Stabilisation} method (LPS) by Becker \& Braack
\cite{BeckerBraack2001}. Secondly, we study an \textit{interior penalty} technique developed by Burman \& Hansbo 
\cite{BurmanHansbo2006edge, BurmanFernandezHansbo2006}. 
The first approach has been studied in detail for flow problems on anisotropic domains~\cite{BraackRichter2006}. 
The correct form of the stabilisation terms within the
ALE formulation is novel, however. For the derivation, we consider the stabilisation terms on the moving domains 
$\Omega(t)$ first and transform the terms to the reference frame taking care of the anisotropies.

We assume that the  mesh $\Omega_h$ has a patch-hierarchy in the sense, that always four
adjacent quads
arise from refinement of one common patch element. We denote the mesh of patch elements by
$\Omega_{2h}$.

\subsection{Local Projection Stabilisation}

On a Cartesian mesh, the \textit{Local Projection Stabilisation} method adds the stabilisation term
\[
 S_{\text{LPS}}(p_h,\xi_h) = \alpha_{\text{LPS}} \sum_{P\in\Omega_{2h}} (h_x^2 \partial_x \kappa_h p_h, \partial_x \kappa_h \xi_h)_{P} 
 + (h_y^2 \partial_y \kappa_h p_h, \partial_y \kappa_h \xi_h)_{P}
\]
to the divergence equation. Here, we use the projection operator $\kappa_h = \text{id} - i_{2h}$ and $i_{2h}$ 
denotes the linear interpolation from $\Omega_h$ to $\Omega_{2h}$ (cf.~\cite{BraackRichter2006}). For the case
of more general meshes, the stabilisation term might be defined in terms of two coordinate directions $\eta_1,\eta_2$
\begin{align}
 S_{\text{LPS}}^c(p_h,\xi_h) = \alpha_{\text{LPS}} \sum_{P\in\Omega_{2h}} \sum_{i=1}^2 (h_i^2 \partial_{\eta_i} \kappa_h p_h, \partial_{\eta_i} \kappa_h \xi_h)_{P},\label{SLPS_current}
 \end{align}
where $\partial_{\eta_i}=\eta_i\cdot \nabla$ denotes the directional derivative and $h_i$ is the cell size in direction $\eta_i$ (see Figure~\ref{fig:transform}).
\begin{figure}[bt]
\centering
 \resizebox*{0.7\textwidth}{!}{
 \begin{picture}(0,0)%
\includegraphics{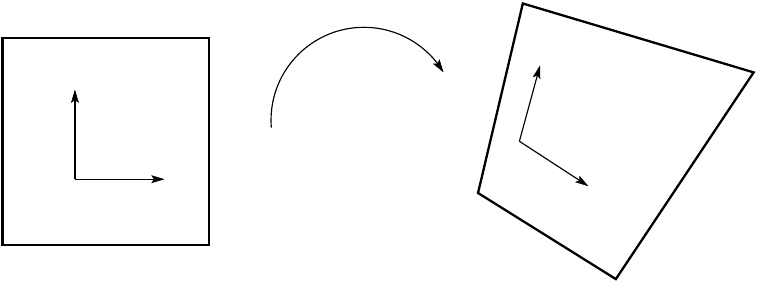}%
\end{picture}%
\setlength{\unitlength}{1450sp}%
\begingroup\makeatletter\ifx\SetFigFont\undefined%
\gdef\SetFigFont#1#2{%
  \fontsize{#1}{#2pt}%
  \selectfont}%
\fi\endgroup%
\begin{picture}(9876,3751)(868,-6929)
\put(2611,-5326){\makebox(0,0)[lb]{\smash{{\SetFigFont{5}{6.0}{\color[rgb]{0,0,0}$e_1$}%
}}}}
\put(1981,-4606){\makebox(0,0)[lb]{\smash{{\SetFigFont{5}{6.0}{\color[rgb]{0,0,0}$e_2$}%
}}}}
\put(8011,-4291){\makebox(0,0)[lb]{\smash{{\SetFigFont{5}{6.0}{\color[rgb]{0,0,0}$\eta_1$}%
}}}}
\put(8281,-5236){\makebox(0,0)[lb]{\smash{{\SetFigFont{5}{6.0}{\color[rgb]{0,0,0}$\eta_2$}%
}}}}
\put(5221,-4201){\makebox(0,0)[lb]{\smash{{\SetFigFont{5}{6.0}{\color[rgb]{0,0,0}$\hat{T}$}%
}}}}
\put(1891,-6856){\makebox(0,0)[lb]{\smash{{\SetFigFont{5}{6.0}{\color[rgb]{0,0,0}$\hat{P}$}%
}}}}
\put(7471,-6406){\makebox(0,0)[lb]{\smash{{\SetFigFont{5}{6.0}{\color[rgb]{0,0,0}$P$}%
}}}}
\end{picture}%
 }
 \caption{\label{fig:transform} ALE map of a cell $P$ and the local coordinate direction $\eta_1,\eta_2$.}
\end{figure}

In order to simplify the presentation, we assume that the reference grid consists of Cartesian quadrilaterals $K$ with
edge sizes $\hat{h}_1$ and $\hat{h}_2$. The cells might be arbitrarily anisotropic, however. We remark that this assumption is not 
necessary in general, but serves to simplify the derivation of the stabilisation term here. Furthermore, note that
the corresponding moved mesh in the current configuration, is not Cartesian in general. 

We use the formulation (\ref{SLPS_current}) on the current domain to derive the ALE stabilisation on the reference domain. By the ALE map the scaled unit vectors are mapped to
\[
 \tilde\eta_1 := \hat{F} \begin{pmatrix} \hat{h}_1 \\ 0 \end{pmatrix}, \quad \tilde\eta_2 := \hat{F} \begin{pmatrix}  0  \\ \hat{h}_2 \end{pmatrix},
 \quad \eta_i = \frac{\tilde\eta_i}{\|\tilde\eta_i\|} = \frac{\tilde\eta_i}{h_i}. 
\]
where $\hat{h}_i$ is the length in horizontal or vertical direction of the Cartesian grid on the reference domain. Note that $F$ and thus $\eta_i$ are in general not constant
within a cell and that the resulting vectors $\eta_1,\eta_2$ are only orthogonal in the case that the ALE map is a translation or a rotation.
As long as the ALE map does not degenerate, the two vectors will
be linearly independent, however, such that stability in any coordinate direction is ensured. This observation holds true in the case of
arbitrary stretching or compression of cells. 

\noindent With these definitions, it holds that
\[
 h_1 \partial_{\eta_1} p_h = h_1\eta_1 \cdot \nabla p_h = \hat{F} \begin{pmatrix} \hat{h}_1 \\ 0 \end{pmatrix} \cdot \hat{F}^{-T} \hat{\nabla} \hat{p}_h = \hat{h}_1 \hat\partial_1 \hat{p}_h.
\]
By the same argumentation, we have $h_2 \partial_{\eta_2} p_h = \hat{h}_2 \hat\partial_2 \hat{p}_h$. Altogether, this yields
\begin{align}\label{stabLPS}
 S_{\text{LPS}}(p_h,\xi_h) &= \alpha_{\text{LPS}} \sum_{P\in\Omega_{2h}} \sum_{i=1}^2 (h_i^2 \partial_{\eta_i} \kappa_h p_h, \partial_{\eta_i} \kappa_h \xi_h)_{P} \\
 &= \alpha_{\text{LPS}}  \sum_{P\in\Omega_{2h}} \sum_{i=1}^2\left(\hat{J} \hat{h}_i^2 \hat{\partial}_i \kappa_h \hat{p}_h, \hat{\partial}_i \kappa_h \hat{\xi}_h\right)_{\hat{P}},
\end{align}
where the determinant $\hat{J}$ appears due to integral transformation. As this stabilisation is equivalent to the LPS stabilisation technique proposed by 
Braack \& Richter~\cite{BraackRichter2006} for anisotropic grids, stability
and convergence estimates can be shown analogously to their result on the  current system $\Omega(t)$.

\subsection{Edge stabilisation}

As a second possibility, we study an \textit{interior penalty} technique~\cite{BurmanFernandezHansbo2006}. Let ${\cal E}_h$ be the set of interior edges of the triangulation $\Omega_h$.
For anisotropic grids, the stabilisation term is usually defined by
\begin{align}\label{StandardIP}
 S_e^c(p_h,\xi_h) = \alpha_{ip}\sum_{e \in {\cal E}_h} \int_e h_n^3 [\partial_n p_h]_e\, [\partial_n \xi_h]_e \, do.
\end{align}
Here, $\partial_n$ is the normal derivative and the brackets $[\cdot]_e$ stand for the jump of the respective function 
over the edge $e$. Furthermore, $h_n$ denotes the size of a cell in the direction orthogonal to the edge $e$. It is sufficient
to stabilise in normal direction of the edges, as the tangential derivatives vanish by the continuity of the space $V_h$.
If the mesh has a patch hierarchy, it is sufficient to sum over all interior patch edges.

Here, we will use a slightly different stabilisation term. In addition to the current normal $n$, we use the ALE-transform of the normal vector of
the reference system
\[
 n_F = \frac{\hat{F} \hat{n}}{\|\hat{F}\hat{n}\|}, \quad h_{n_F} = \hat{h}_n {\|\hat{F}\hat{n}\|}. 
\]
\begin{remark}{(Mapped normal vector)}
 Note that for the normal vector in the current configuration (i.e. the vector that is normal to the mapped tangential vector), it holds that
 \begin{align*}
  n=\frac{\hat{F}^{-T} \hat{n}}{\|\hat{F}^{-T} \hat{n}\|}.
 \end{align*}
This vector is in general only equal to the mapped normal vector $n_F$, if the mapping is a translation or a rotation. Otherwise the mapped normal
$n_F$ is not orthogonal to the mapped tangential vector.
\end{remark}

\noindent We define the stabilisation term by 
\begin{align}\label{defStab}
 S_e(p_h,\xi_h) = \alpha_{ip} \sum_{e \in {\cal E}_h} \int_e h_n h_{n_F}^2 [\partial_{n_F} p_h]_e\, [\partial_{n_F} \xi_h]_e \, do.
 \end{align}
Now, we have again the relation
\[
 h_{n_F} \partial_{n_F} p_h = h_{n_F} n_F \cdot \nabla p_h = \hat{h}_n \hat{F} \hat{n} \cdot \hat{F}^{-T} \hat{\nabla} \hat{p}_h = \hat{h}_n \hat{\partial}_{\hat{n}} \hat{p}_h.
\]
The volume element appearing from integral transformation between an integral over an edge $e$ and an integral 
over the corresponding edge $\hat{e}$ on the reference element is given by 
\[
\hat{J}_e = \hat{J}/ \|\hat{F}^{-T}\hat{n}\| = \hat{J} \frac{\hat{h}_n}{h_n}.
\]
Thus, by integral transformation, we have the equality
\begin{align}
\begin{split}\label{stabEdge}
 S_e(p_h,\xi_h) &= \alpha_{ip} \sum_{e \in {\cal E}_h} 
 \int_e h_n h_{n_F}^2 [\partial_{n_F} p_h]_e\, [\partial_{n_F} \xi_h]_e \, do \\
 &= \alpha_{ip} \sum_{\hat{e} \in \hat{\cal E}_h} \int_{\hat e} 
 \hat{J} \hat{h}_n^3 [\hat\partial_{\hat{n}} \hat{p}_h]_{\hat{e}}\, [\hat{\partial}_{\hat{n}} \hat{\xi}_h]_{\hat{e}} \, d\hat{o}. 
 \end{split}
\end{align}

\begin{remark}{(Choice of normal vectors)}
 In (\ref{defStab}), the factor $h_n$ is needed to compensate the determinant $\hat{J}$ coming from the integral transformation. 
 The factors $h_{n_F}$ are needed to compensate the derivatives $\partial_{n_F}$ that may get large when the grid becomes highly 
 anisotropic. Alternatively, we could have used (\ref{StandardIP}) as starting point. We would obtain the stabilisation term
 \begin{align}\label{edgestab_altern}
  \tilde{S}_e(p_h,\xi_h) = \alpha_{ip} \sum_{\hat{e} \in \hat{\cal E}_h} 
  \int_{\hat e} \hat{J} \hat{h}_n \hat{h}_{\tilde{n}}^2 
  [\hat\partial_{\tilde{n}} \hat{p}_h]_{\hat{e}}\, [\hat{\partial}_{\tilde{n}} \hat{\xi}_h]_{\hat{e}} \, d\hat{o}, 
 \end{align}
where $\tilde{n}=\frac{\hat{F}^{-1} n}{\|\hat{F}^{-1} n\|}$ is the pre-image of the normal vector $n$.

As long as the ALE map does not degenerate, however, $n_F$ and the tangential vector $\tau$ are linearly independent as well, such that the
stabilisations (\ref{defStab})=(\ref{stabEdge}) stabilise in both coordinate directions. As (\ref{stabEdge}) is much easier to evaluate compared to (\ref{edgestab_altern}),
we prefer to use this stabilisation term.
\end{remark}

\subsection{Equivalence}

If the mesh has a patch-hierarchy and if we use only interior edges of patches for the edge stabilisation,
both stabilisation techniques are equivalent, in the sense that both stabilisation terms are in the discrete spaces upper- and
lower-bounded by each other:
 
\begin{lemma}
Assume that there exists a regular triangulation $\hat{\Omega}_{2h}$, whose elements (called 
``patches'') are the union
of four neighbouring cells of the triangulation $\hat{\Omega}_h$. If we consider only interior patch edges
in the definition of the interior penalty stabilisation $S_e$ (\ref{stabEdge}), 
there is a constant C such that
\begin{align}\label{equiv}
 \frac{1}{C} S_e(p_h,p_h) \leq S_{\text{LPS}}(p_h,p_h) \leq C S_e(p_h,p_h). 
\end{align}
\end{lemma}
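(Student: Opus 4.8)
The plan is to reduce everything to the reference configuration and exploit the explicit transformed expressions already computed in \eqref{stabLPS} and \eqref{stabEdge}. Both stabilisation terms have, after transformation, the form of a sum over patches of $\hat{J}$-weighted squared reference derivatives: $S_{\text{LPS}}(p_h,p_h) = \alpha_{\text{LPS}} \sum_{\hat{P}} \sum_{i=1}^2 (\hat{J}\hat{h}_i^2 \hat{\partial}_i \kappa_h \hat{p}_h, \hat{\partial}_i \kappa_h \hat{p}_h)_{\hat P}$, and $S_e(p_h,p_h) = \alpha_{ip} \sum_{\hat e} \int_{\hat e} \hat{J} \hat{h}_n^3 [\hat{\partial}_{\hat n}\hat{p}_h]^2 \, d\hat{o}$, where the second sum runs over interior edges of patches only. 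Since $\hat{\Omega}_{2h}$ is a regular triangulation of Cartesian quadrilaterals (by the simplifying assumption made before the LPS derivation), $\hat{J}$ is bounded above and below on each patch by a constant depending only on the regularity of the ALE map, so up to such constants it suffices to compare the \emph{unweighted} patch-local bilinear forms $a_{\hat P}(\hat{p}_h) := \sum_i \hat{h}_i^2 \|\hat{\partial}_i \kappa_h \hat{p}_h\|_{\hat P}^2$ and $b_{\hat P}(\hat{p}_h) := \sum_{\hat e \subset \hat P} \hat{h}_n^3 \|[\hat{\partial}_{\hat n}\hat{p}_h]_{\hat e}\|_{L^2(\hat e)}^2$ on each fixed reference patch.

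The core is therefore a purely local, $h$-independent equivalence on a single reference patch $\hat{P}$, which I would establish by a scaling/affine-equivalence argument followed by a finite-dimensional compactness argument on the unit reference patch. First I would rescale the patch $\hat{P}$ to a reference unit patch $\hat{P}_0$ of four unit cells via the anisotropic diagonal affine map $\mathrm{diag}(\hat{h}_1,\hat{h}_2)$; under this map both $a_{\hat P}$ and $b_{\hat P}$ scale by the \emph{same} power of the mesh sizes (the $\hat{h}_i^2$ and $\hat{h}_n^3$ weights are precisely the ones that cancel the Jacobians and the derivative scalings — this is why those exponents were chosen), so the equivalence constant on $\hat P$ equals the equivalence constant on the fixed unit patch $\hat P_0$, independent of $h$ and of the anisotropy ratio $\hat h_1/\hat h_2$. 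On $\hat P_0$ both $a_{\hat P_0}$ and $b_{\hat P_0}$ are seminorms on the finite-dimensional space of bilinear ($Q_1$-per-cell, globally continuous) functions modulo the interpolation $i_{2h}$; they both vanish exactly on the range of $i_{2h}$ (i.e. on globally bilinear functions on the whole patch: for $S_{\text{LPS}}$ because $\kappa_h$ annihilates it, for $S_e$ because the normal derivative is then continuous across interior patch edges), and one checks that each is positive definite on the quotient space $Q_h(\hat P_0)/\mathrm{range}(i_{2h})$. Two positive-definite quadratic forms on the same finite-dimensional space are mutually bounded, giving the constant $C$ on $\hat P_0$, hence on every $\hat P$.

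Summing the local bounds over all patches $\hat P \in \hat{\Omega}_{2h}$ and reinstating the $\hat J$ factors (bounded by ALE regularity) and the constants $\alpha_{\text{LPS}}, \alpha_{ip}$ yields \eqref{equiv}. One technical point to handle carefully: $S_{\text{LPS}}$ is naturally a patchwise volume term whereas $S_e$ involves only the \emph{interior} edges of a patch, so I should state the local claim as an equivalence of $a_{\hat P}$ with the sum over the (four) interior edges of $\hat P$ only — and verify that these four edges carry enough information, i.e. that $b_{\hat P_0}$ restricted to interior edges is still definite on the quotient; this follows because a $Q_h(\hat P_0)$ function whose normal derivative is continuous across all four interior patch edges is globally bilinear on the patch. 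I expect this verification of definiteness of the edge form on the quotient (and the matching of scaling exponents under anisotropic rescaling, to be sure the constant is uniform in the aspect ratio) to be the only real obstacle; once those are in place the result is immediate. Note that this is essentially the classical LPS/interior-penalty equivalence of Becker--Braack and Burman--Hansbo, and the only new element is checking that the ALE weights $\hat J \hat h_i^2$ and $\hat J \hat h_n^3$ respect it, which they do by construction.
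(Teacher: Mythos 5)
Your proposal is correct and follows essentially the same route as the paper: transform each patch to a fixed unit patch (where the weights $\hat{J}\hat{h}_i^2$ and $\hat{J}\hat{h}_n^3$ scale out identically), observe that both unweighted forms are norms on the five-dimensional quotient $V_h^{loc}\setminus V_{2h}^{loc}$, and invoke finite-dimensional norm equivalence. The one step you flag as ``to verify'' --- definiteness of the interior-edge form on the quotient --- is precisely what the paper checks explicitly, by showing that vanishing normal-derivative jumps on the four interior edges force the five remaining degrees of freedom (edge midpoints and patch midpoint) to be zero.
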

\begin{proof}
To see this, we denote the patch-wise contributions in the stabilisation terms by
$S_{e,P}$ and $S_{LPS,P}$ and transform from a patch $\hat{P}$ in ALE coordinates 
to a unit patch $\check{P}$ by means of the inverse of the bi-linear, bijective map 
$\Theta_{\hat{P}}: \check{P} \to \hat{P}$. The transformed terms on the unit patch are denoted by
\begin{align*}
 \check{S}_{e,\check{P}}(\check p_h, \check \xi_h) 
 &:=\alpha_{ip} \sum_{\check{e} \subset \text{int}(\check{P})} \int_{\check e} 
 [\check\partial_{\check{n}} \check{p}_h]_{\check{e}}\, 
 [\check{\partial}_{\check{n}} \check{\xi}_h]_{\check{e}} \, d\check{o}, 
 \\
 \check{S}_{LPS, \check{P}}(\check p_h, \check \xi_h) &:= \alpha_{\text{LPS}} 
 \sum_{i=1}^2\left(\check{\partial}_i \kappa_h \check{p}_h, 
 \check{\partial}_i \kappa_h \check{\xi}_h\right)_{\check{P}},
\end{align*}
where functions on the unit patch are defined by $\check p_h(\check{x}) := \hat{p}_h(\Theta_{\hat{P}} (\check{x}))$ and 
the direction of the derivatives $\check{\partial}_i$ 
and $\check{\partial}_{\check{n}}$
correspond to the directions of $\hat{\partial}_i$ and $\hat{\partial}_{\hat{n}}$, 
under the application of the map $\Theta_{\hat{P}}^{-1}$.

Then, we show that
\begin{align*}
 \frac{1}{C} S_{e,P}(p_h,p_h) \leq |T| \check{S}_{e,\check{P}}(\check p_h, \check p_h) 
 \leq c |T| \check{S}_{LPS, \check{P}}(\check p_h, \check p_h) \leq C S_{LPS,P}(p_h, p_h).
\end{align*}
The second inequality in \eqref{equiv} follows analogously.

The first and last inequality follow by the usual transformation formulas.
For the estimation on the unit patch $\check{P}$, we show that both stabilisation
terms define norms on the local quotient space $V_h^{loc}\setminus V_{2h}^{loc}$ on $\check{P}$. 
The only norm property,
that is non-obvious is the definiteness, i.e. for $\check{p}_h \in V_h^{loc}\setminus V_{2h}^{loc}$ 
we have to show that
\begin{align}\label{definitness}
 \check{S}_*(\check{p}_h,\check{p}_h) =0 \quad \Rightarrow \quad \check{p}_h =0,
\end{align}
where $\check{S}_* = \check{S}_{\text{LPS},\check{P}}$ or $\check{S}_{e,\check{P}}$.
For the LPS stabilisation, the left statement implies directly $\kappa_h \check{p}_h=$const on each 
of the sub-cells $\check{K}\subset \check{P}$.
As $\kappa_h \check{p}_h(x_i)=0$ by definition in the four outer vertices $x_i$ ($i=0,...,3$), this implies
\eqref{definitness}.

For the ip stabilisation, we proceed in the following way: The left statement in~\eqref{definitness}
implies $[\check\partial_{\check{n}} \check{p}_h] =0$ on the four interior edges of $\check{P}$. 
A function $\check{p}_h \in V_h^{loc}\setminus V_{2h}^{loc}$ is zero in the four outer vertices and is therefore
defined by the five degrees of freedom in the midpoints of edges $a_i (i=0,...,3)$ and the midpoint of 
the patch $m_{\check{P}}$,
see Figure~\ref{fig.equiv}. 

We start by showing that $\check{p}_h(a_0)=0$. Therefore, note that
on the bottom line $\Gamma_0$, we have the conditions 
$\check{p}_h(x_0)
=\check{p}_h(x_1)=0$ and $[\check{\partial}_x \check{p}_h](a_0)=0$. As $\check{p}_h$ is 
linear on both
sides of $a_0$ and continuous across $e_0$, the only possibility to fulfil the jump condition 
is $\check{p}_h(a_0)=0$. With the 
same argumentation, we can show that $\check{p}_h(a_i)=0$ for $i=1,...,3$ and 
$\check{p}_h(m_{\check{P}})=0$.
\end{proof}

\begin{figure}
\centering
 \resizebox*{0.39\textwidth}{!}{
 \begin{picture}(0,0)%
\includegraphics{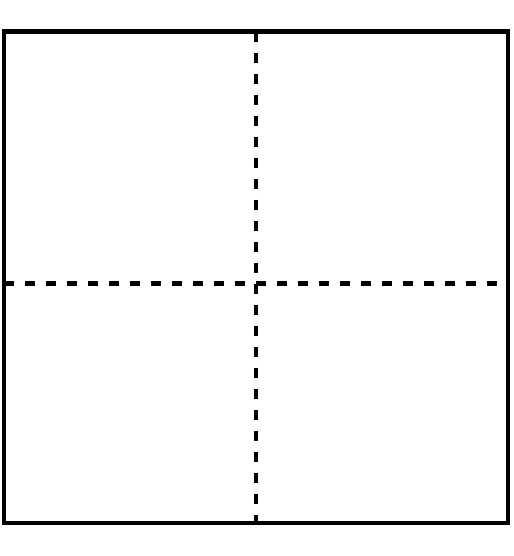}%
\end{picture}%
\setlength{\unitlength}{1326sp}%
\begingroup\makeatletter\ifx\SetFigFont\undefined%
\gdef\SetFigFont#1#2{%
  \fontsize{#1}{#2pt}%
  \selectfont}%
\fi\endgroup%
\begin{picture}(7308,7906)(1747,-8450)
\put(5626,-7711){\makebox(0,0)[lb]{\smash{{\SetFigFont{5}{6.0}{\color[rgb]{0,0,0}$a_0$}%
}}}}
\put(5626,-4336){\makebox(0,0)[lb]{\smash{{\SetFigFont{5}{6.0}{\color[rgb]{0,0,0}$m_{\check{P}}$}%
}}}}
\put(2026,-4336){\makebox(0,0)[lb]{\smash{{\SetFigFont{5}{6.0}{\color[rgb]{0,0,0}$a_3$}%
}}}}
\put(5626,-736){\makebox(0,0)[lb]{\smash{{\SetFigFont{5}{6.0}{\color[rgb]{0,0,0}$a_2$}%
}}}}
\put(5626,-2761){\makebox(0,0)[lb]{\smash{{\SetFigFont{5}{6.0}{\color[rgb]{0,0,0}$e_2$}%
}}}}
\put(5626,-6136){\makebox(0,0)[lb]{\smash{{\SetFigFont{5}{6.0}{\color[rgb]{0,0,0}$e_0$}%
}}}}
\put(3376,-4876){\makebox(0,0)[lb]{\smash{{\SetFigFont{5}{6.0}{\color[rgb]{0,0,0}$e_3$}%
}}}}
\put(6976,-4876){\makebox(0,0)[lb]{\smash{{\SetFigFont{5}{6.0}{\color[rgb]{0,0,0}$e_1$}%
}}}}
\put(8326,-4336){\makebox(0,0)[lb]{\smash{{\SetFigFont{5}{6.0}{\color[rgb]{0,0,0}$a_1$}%
}}}}
\put(7651,-691){\makebox(0,0)[lb]{\smash{{\SetFigFont{5}{6.0}{\color[rgb]{0,0,0}$x_2=(1,1)$}%
}}}}
\put(1801,-691){\makebox(0,0)[lb]{\smash{{\SetFigFont{6}{7.2}{\color[rgb]{0,0,0}$x_3=(0,1)$}%
}}}}
\put(1891,-8386){\makebox(0,0)[lb]{\smash{{\SetFigFont{5}{6.0}{\color[rgb]{0,0,0}$x_0=(0,0)$}%
}}}}
\put(5041,-8386){\makebox(0,0)[lb]{\smash{{\SetFigFont{5}{6.0}{\color[rgb]{0,0,0}$\Gamma_0$}%
}}}}
\put(7651,-8386){\makebox(0,0)[lb]{\smash{{\SetFigFont{5}{6.0}{\color[rgb]{0,0,0}$x_1=(1,0)$}%
}}}}
\end{picture}%
}
 \caption{\label{fig.equiv} Unit patch $\check{P}$. The degrees of freedom of the local space
 $V_h^{loc} \setminus V_{2h}^{loc}$ are located in the midpoints of edges $a_i$ ($i=0,...,3$) and
 the patch midpoint $m_{\check{P}}$.}
\end{figure}

\subsection{Fully discrete system}

For time discretisation, we apply the backward Euler scheme with a uniform time step $\delta t$. 
Given the solutions $\vr_h^{m-1}$ and $\hat{c}_h^{m-1}$ at the previous time-step, one time-step of the fully discrete system
with classical boundary conditions~\eqref{DirichletNeumann} reads:
\textit{Find $\vr_h^m\in \vr^{\text{dom}} + {\hat{\cal V}_h}, \pr_h^m \in \hat{\cal L}_h, 
\hat{c}_h^m \in \hat{c}_{bl} + \hat{\cal X}_h$ such that}
\begin{align*}
\rho(\Jr(\delta t^{-1}(\vr_h^m - \vr_h^{m-1})+\nablar\vr_h^m \Fr^{-1}(\vr_h^m-\Dt\Tr)),\phir)_{\Fd}\;\quad\qquad\qquad&\\
+(\Jr\sigmar(\vr_h^m, \pr_h^m) \Fr^{-T},\nablar\phir)_{\Fd}=0 \;\forall \hat\phi \in \hat{\cal V}_h,&\\
(\hat{\rm div}(\Jr\Fr^{-1}\vr_h^m),\xhir)_{\Fd}=0 \;\forall \xhir \in \hat{\cal L}_h,&\\
(\Jr(\delta t^{-1}(\hat{c}_h^m - \hat{c}_h^{m-1})
+(\vr_h^m-\Dt\Tr)^T \Fr^{-T} \nablar \hat{c}_h^m,\hat\phi)_{\Fd}
+(D\Jr\Fr^{-T}\nablar \hat{c}_h^m,\Fr^{-T}\nablar \hat\psi)_{\Fd}& \\
+\Big( {\cal H}(-\vr_h^{m,T} \hat{F}^{-T}\hat{n}) (\gamma (\hat{c}_h^m -c_{\text{ext}}) 
-D \frac{\Fr^{-T}\hat{n}}{\|\Fr^{-T}\hat{n}\|}\cdot \hat{F}^{-T} \hat{\nabla} \hat{c}_h^m),\hat\psi\Big)_{\Gamma_{io}}
= 0 \; \forall \hat\psi \in \hat{\cal X}_h&.
\end{align*}

Note that the Heaviside function ${\cal H}(-\vr_h^{m,T} \hat{F}^{-T}\hat{n})$ is discretised in a fully implicit way.
This is possible due to the following observations.
To solve the non-linear system of equations, 
we use a Newton-type method.
 As there is no feedback from the concentration $\hat{c}$ to the fluid variables $\hat{v}$ and $\hat{p}$,
the system can be split in each Newton step to solve for the flow variables first and for the 
concentration $\hat{c}$ afterwards. In the numerical simulations conducted for this paper, we have
simply ignored the non-differentiability of ${\cal H}$ at $\vr_h^{m,T} \hat{F}^{-T}\hat{n}=0$,
as this equality was never exactly fulfilled. In very few time step, we observed some issues with 
Newton convergence, as $\vr_h^{m,T} \hat{F}^{-T}\hat{n}$ was changing its sign. In these cases,
a simple damping strategy was enough to recover Newton convergence with only a few extra iterations.

\section{Numerical results}
\label{sec:num}

All our results have been obtained with the finite element 
library \textit{Gascoigne 3d}~\cite{Gascoigne}. To solve the non-linear system of equations
a Newton-type method is used; the resulting linear systems are solved by a direct 
solver (umfpack~\cite{umfpack}).

We start by testing the pressure stabilisation techniques 
for a stationary Stokes problem with a known analytical solution on highly anisotropic domains in Section~\ref{sec.numpress}. Then we study 
the full problem on a representative domain of an alveolar sac in Section~\ref{sec.numsac}.

\subsection{Comparison of the pressure stabilisation techniques}
\label{sec.numpress}

\begin{figure}[bt]
\centering
\includegraphics[width=0.85\textwidth]{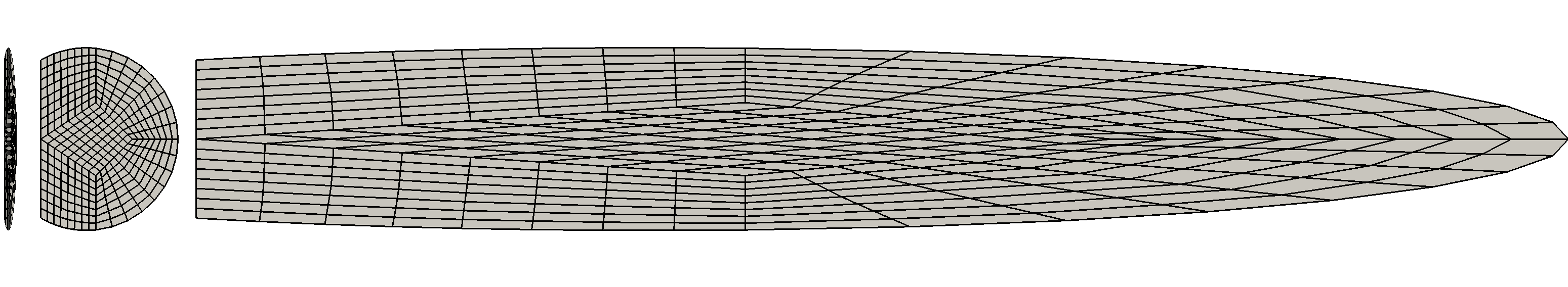}
\caption{ Meshes for $a=0.1$ (left), $a=1$ (middle) and $a=10$ (right), deformed by the mapping $T$. \label{fig:mesh}}
\end{figure}

In this section, we want to study the stabilisation techniques 
on the domains
\begin{align*}
 \Omega_a :=\left\{ x \in \mathbb{R}^2\, \big| \, x_1>0,\, \left( a^{-1} x_1 - 0.5\right)^2 + x_2^2 < 1 \right\}   
\end{align*}
where $a>0$, that are stretched ($a>1$) or compressed ($a<1$) in horizontal direction. $\Omega_a$ is
the image of the domain $\hat{\Omega}:=\Omega_1$ under the mapping
\begin{align*}
x& = T(\hat{x}) = (a \hat{x}_1, \hat{x}_2). 
\end{align*}
This map will also be used in our computations, that will all be done on the reference domain $\hat{\Omega}$, which is 
decomposed by means of regular triangulations ${\cal T}_h$. We will show results for the extreme cases
$a=0.01$ and $a=100$. To illustrate the anisotropies, we show deformed meshes for the less extreme cases $a=0.1$ and $a=10$
in Figure~\ref{fig:mesh}. 
In order to isolate the effect of the pressure stabilisation from further numerical issues, we 
consider the stationary Stokes equations, formulated in ALE coordinates:
\textit{Find $\vr\in \vr^{\text{dom}} + {\hat{\cal V}}, \pr \in \hat{\cal L}$ such that}
\begin{align*}
(\Jr\nablar \vr \Fr^{-1},\nablar\phir \Fr^{-1})_{\Fd} - (\pr,{\rm div} (\Jr\Fr^{-1}\phir))_{\Fd} &= (\Jr \fr, \phir) \quad\forall \hat\phi \in \hat{\cal V},\\
(\hat{\rm div}(\Jr\Fr^{-1}\vr),\xhir)_{\Fd} &=0 \quad\forall \xhir \in \hat{\cal L}.
\end{align*}
The right-hand side $\fr$ will be chosen in such a way that an analytical solution is known. We impose a do-nothing condition on the left boundary $\Gamma_l$ (corresponding to $x_1 = 0$) and
homogeneous Dirichlet conditions for the velocities on the remaining boundary. In order to construct a divergence-free velocity field, we use the potential
\begin{align*}
 \psi(x) = k_a^2(x) \sin^3(a^{-1} x_1), \quad \text{ where } k_a(x) = (a^{-1} x_1 - 0.5)^2 + x_2^2 -1 
\end{align*}
and define
\begin{align*}
 v &:= {\rm rot } \,\psi = (\partial_2 \psi, -\partial_1 \psi),\quad
 p := \partial_{12} \psi.
\end{align*}
By definition of the pressure a do-nothing condition is fulfilled on $\Gamma_l$. The definition of $k_a$ guarantees homogeneous Dirichlet conditions
on $\partial\Omega_a \setminus \Gamma_l$.

We will compare the anisotropic LPS stabilisation $S_{\text{LPS}}$ defined in (\ref{stabLPS}) and the anisotropic edge stabilisation $S_e$ (\ref{stabEdge}) 
to variants of the LPS stabilisation for isotropic domains and meshes
\begin{align}
 S_{\text{LPS}}^{\text{iso}}(\hat{p}_h,\hat{\xi}_h) &:= \alpha_2 \left(\hat{J} \hat{F}^{-T} \hat{\nabla} \hat{p}_h, \hat{F}^{-T} \hat{\nabla} \hat{\xi}_h\right)_{\hat{\Omega}} = \alpha_2 \left(\nabla p_h,\nabla \xi_h\right)_{\Omega(t)}\\
 S_{\text{LPS}}^{\text{simple}}(\hat{p}_h,\hat{\xi}_h) &:= \alpha_3 \left( \hat{\nabla} \hat{p}_h, \hat{\nabla} \hat{\xi}_h\right)_{\hat{\Omega}}.
\end{align}
We use the stabilisation parameters $\alpha_{\text{LPS}}=\alpha_2=\alpha_3=\frac{1}{\nu}$ for the LPS-based stabilisations
and $\alpha_{ip}=\frac{1}{60\nu}$ for the interior-penalty technique. These parameters have been chosen empirically by comparing results in terms of robustness 
and errors for different parameters.

\noindent We compare the effect of the stabilisations by means of the $H^1$-semi-norm error of velocity and the $L^2$-norm errors of pressure and velocity
\begin{align*}
 \|\nabla (v-v_h)\|_{\Omega_{a}} &= \| \Jr^{1/2} \nablar (\vr - \hat{v}_h) \Fr^{-1}\|_{\hat{\Omega}}, \quad
 \|v-v_h\|_{\Omega_{a}} = \| \Jr^{1/2} (\vr - \hat{v}_h)\|_{\hat{\Omega}},\\
  \|p-p_h\|_{\Omega_{a}} &= \| \Jr^{1/2} (\hat{p} - \hat{p}_h)\|_{\hat{\Omega}}.
\end{align*}
Moreover, we introduce the functional
\begin{align*}
 J_{\text{div}}(v_h) &:= \int_{\Omega(t)} ({\rm div} \, v_h)^2 \, dx \, 
 = \,  \int_{\hat{\Omega}}\hat{J}^{-1}  \left(\widehat{\rm div} 
 \big(\hat{J} \hat{F}^{-1} \hat{v}_h\big)\right)^2 \, d\hat{x},
\end{align*}
measuring the error with respect to incompressibility.
Note that $J_{\text{div}}(v)$ vanishes for the continuous solution $v$. 
This is, however, altered by the pressure stabilisation, see (\ref{modIncompr}).

In Table~\ref{tab.press}, we compare the values of the functionals for $a=0.01$, where the domain is highly compressed.
Note that for this parameter the matrix $\hat{F}^{-T}$ is given by
 \begin{align*}
 \hat{F}^{-T} = \begin{pmatrix}
   100 &0 \\ 0 &1
  \end{pmatrix}
 \end{align*}
 and the determinant is $\hat{J}=\frac{1}{100}$. The stabilisation in horizontal direction is thus by a factor 100 larger for both isotropic stabilisation variants 
 $S_{\text{LPS}}^{\text{iso}}$ and $S_{\text{LPS}}^{\text{simple}}$, that tend to over-stabilise
compared to the anisotropic variant $S_{\text{LPS}}$. This can be observed in the functional $J_{\text{div}}(v_h)$, 
where $S_{\text{LPS}}^{\text{iso}}$ and $S_{\text{LPS}}^{\text{simple}}$ yield a slightly larger deviation from zero compared to $S_{\text{LPS}}$. A much smaller value
is obtained for the (consistent) edge stabilisation $S_e$. 

The $L^2$-norm error in the pressure, on the other hand, shows exactly the opposite picture, the smallest errors
being obtained for the isotropic LPS stabilisations. Asymptotically, the behaviour in all the functionals is similar, such that from this test case no clear advantage for 
any of the methods can be deduced. The $L^2$-norm and $H^1$-semi-norm errors in the velocity are almost independent of the pressure stabilisation and show the expected convergence order
for piece-wise bi-linear elements. In the $L^2$-norm of the pressure, super-convergence 
can be observed for all stabilisations. This has frequently been observed 
in literature before, see e.g.~\cite{SuperconvergenceBonitoBurman, SuperconvergenceMassing}.

\begin{table}[bt]
\centering
\begin{tabular}{r|ccc|ccc}
&\multicolumn{3}{c|}{ $J_{\text{div}}(v_h)$} &\multicolumn{3}{c}{ $\|p-p_h\|_{\Omega_a}$ }\\
\#cells & $S_{\text{LPS}}^{\text{iso}/\text{simple}}$ & $S_{\text{LPS}}$ &$S_e$
& $S_{\text{LPS}}^{\text{iso}/\text{simple}}$ & $S_{\text{LPS}}$ &$S_e$\\
\hline
320&$6.63\cdot10^{-3}$  & $6.56\cdot10^{-3}$ &  $2.33\cdot10^{-3}$ & $0.914$ & $0.999$ & $1.217$\\
1280 &$1.12\cdot10^{-3}$ & $1.05\cdot10^{-3}$ &  $3.60\cdot10^{-4}$ & $1.174$ & $1.219$ & $0.902$\\
4920 &$6.59\cdot10^{-5}$  & $6.36\cdot10^{-5}$ &  $5.05\cdot10^{-5}$ & $0.418$ & $0.422$ & $0.406$\\
19680 &$1.09\cdot10^{-5}$ & $1.08\cdot10^{-5}$ &  $1.04\cdot10^{-5}$ & $0.105$ & $0.106$ & $0.108$\\
\hline 
$\alpha_{\text{conv}}$ &2.61 &2.69 &2.70 &1.56 &1.60 &1.29
\end{tabular}
\vspace{0.3cm}

\begin{tabular}{r|ccc|ccc}
&\multicolumn{3}{c|}{ $\|\nabla(v-v_h)\|_{\Omega_a}$} &\multicolumn{3}{c}{ $\|v-v_h\|_{\Omega_a}$ }\\
\#cells & $S_{\text{LPS}}^{\text{iso}/\text{simple}}$ & $S_{\text{LPS}}$ &$S_e$
& $S_{\text{LPS}}^{\text{iso}/\text{simple}}$ & $S_{\text{LPS}}$ &$S_e$\\
\hline
320 &$4.338$   &$4.338$  &$4.337$ & $3.03\cdot10^{-3}$ & $3.03\cdot10^{-3}$ & $3.00\cdot10^{-3}$\\
1280 &$2.202$   &$2.202$  &$2.201$ & $7.81\cdot10^{-4}$ & $7.79\cdot10^{-4}$ & $7.74\cdot10^{-4}$\\
4920 &$1.105$   &$1.105$  &$1.105$ & $2.07\cdot10^{-4}$ & $2.07\cdot10^{-4}$ & $2.05\cdot10^{-4}$\\
19680 &$0.553$  &$0.553$  &$0.553$  & $5.54\cdot10^{-5}$ & $5.54\cdot10^{-5}$ & $5.52\cdot10^{-5}$\\
\hline 
$\alpha_{\text{conv}}$ &0.98 &0.98 &0.98 &1.95 &1.95 &1.95
\end{tabular}

 \caption{\label{tab.press} Comparison of the functional values $J_{\text{div}}(v_h)$, the $L^2$-norms of 
 pressure and the $H^1$-semi- and $L^2$-norm errors of the velocity
 for the four different stabilisation techniques on different meshes for the highly compressed 
 domain $\Omega_{0.01}$. All the functional values for $S_{\text{LPS}}^{\text{simple}}$ and $S_{\text{LPS}}^{\text{iso}}$
 are identical in the first three digits and are therefore combined in one column. The convergence order is estimated by means of a least-squares-fit
 of the function $f(h) =ch^{\alpha_{\text{conv}}}$ against $c$ and $\alpha_{\text{conv}}$.}
\end{table}

Next, we study in Table~\ref{tab.press100} the case $a=100$, i.e.$\,$a 
highly stretched domain. In this case, the matrix $\hat{F}^{-T}$ is given by
 \begin{align*}
 \hat{F}^{-T} = \begin{pmatrix}
   0.01 &0 \\ 0 &1
  \end{pmatrix}
 \end{align*}
 and the determinant is $\hat{J}=100$. The stabilisation in horizontal direction is thus by a factor 100 smaller for both isotropic variants
 $S_{\text{LPS}}^{\text{iso}}$ and 
$S_{\text{LPS}}^{\text{simple}}$, which means
that these might not stabilise enough. 

Indeed, the discrete system was ``numerically singular'' on the finest grid with 19'680 cells
for the isotropic stabilisations $S_{\text{LPS}}^{\text{iso}}$ and $S_{\text{LPS}}^{\text{simple}}$, 
i.e.$\,$the linear (direct) solver was not able 
to solve the linear system. Varying the stabilisation parameters $\alpha$, we found that
we would have to choose the larger stabilisation parameters 
$\alpha\geq 10$ and $\alpha\geq 15$ for $S_{\text{LPS}}^{\text{simple}}$
and $S_{\text{LPS}}^{\text{iso}}$, respectively. However, with this choice of parameters 
the same issue arises on the next-finer mesh. For the 
anisotropic stabilisations $S_{\text{LPS}}$ and $S_e$, on the other hand, no issues regarding 
the solution of the linear systems were observed.

While the convergence of the functional values $J_{\text{div}}(v_h)$ is comparable for all stabilisations (in the cases where the system could be solved),
the $L^2$-norm of the pressure shows a clear advantage for the anisotropic LPS stabilisation 
compared to the isotropic variants. The velocity norm and semi-norm errors are
again almost independent of the pressure stabilisation and converge as expected. Note that the reason 
for the larger absolute values of the functionals compared to $a=0.01$, is that the domain 
$\Omega_a$ is by a factor of $10^4$ longer.

\begin{table}[bt]
\centering
\begin{tabular}{r|cccc}
&\multicolumn{4}{c}{ $J_{\text{div}}(v_h)$}\\
\#cells & $S_{\text{LPS}}^{\text{iso}}$ &$S_{\text{LPS}}^{\text{simple}}$ & $S_{\text{LPS}}$ &$S_e$\\
\hline
320&$5.985$ & $6.079$ & $6.023$ &  $6.002$\\
1280 &$1.548$ & $1.569$ & $1.547$ &  $1.554$\\
4920 &$0.391$ & $0.394$ & $0.391$ &  $0.392$\\
19680 &$-$ & $-$ & $0.098$ &  $0.098$\\
\hline 
 $\alpha_{\text{conv}}$ &1.95 &1.96 &1.96 &1.95
\end{tabular}
\vspace{0.3cm}

 \begin{tabular}{r|cccc}
 &\multicolumn{4}{c}{ $\|p-p_h\|_{\Omega_a}$ }\\
\#cells & $S_{\text{LPS}}^{\text{iso}}$ &$S_{\text{LPS}}^{\text{simple}}$ & $S_{\text{LPS}}$ &$S_e$\\
\hline
320 &$9.67\cdot 10^{-1}$ & $1.71\textcolor{white}{\cdot 10^{-11} }$ & $8.88\cdot 10^{-1}$ & $1.02\textcolor{white}{\cdot 10^{-11} }$\\
1280 &$3.91\cdot 10^{-1}$ & $7.31\cdot 10^{-1}$ & $2.55\cdot 10^{-1}$ & $4.21\cdot 10^{-1}$\\
4920 &$1.70\cdot 10^{-1}$ & $3.08\cdot 10^{-1}$ & $1.13\cdot 10^{-1}$ & $1.98\cdot 10^{-1}$\\
19680 &$-$ & $-$ & $4.74\cdot 10^{-2}$ & $6.82\cdot 10^{-2}$\\
\hline 
$\alpha_{\text{conv}}$ &1.28 &1.23 &1.67 &1.24
\end{tabular}
 \vspace{0.3cm}

\begin{tabular}{r|cc|cc}
&\multicolumn{2}{c|}{ $\|\nabla(v-v_h)\|_{\Omega_a}$}  &\multicolumn{2}{c}{ $\|v-v_h\|_{\Omega_a}$ }\\
\#cells & $S_{\text{LPS}}^{\text{iso}/\text{simple}}$ & $S_{\text{LPS}}=S_e$  
& $S_{\text{LPS}}^{\text{iso}/\text{simple}}$ & $S_{\text{LPS}}=S_e$\\
\hline
320 &$2.13\cdot10^2$   &$2.13\cdot 10^2$ & $1.39\cdot10^1\;\;\,$ & $1.39\cdot10^1\;\;\,$\\
1280 &$1.08\cdot10^2$   &$1.08\cdot 10^2$ & $3.56\textcolor{white}{\cdot 10^{-11} }$ & $3.56\textcolor{white}{\cdot 10^{-11} }$\\
4920 &$5.46\cdot10^1$  &$5.46\cdot 10^1$ & $9.00\cdot10^{-1}$ & $9.00\cdot10^{-1}$\\
19680 &$-$ &$2.73\cdot 10^1$ & $-$ & $2.26\cdot10^{-1}$\\
\hline 
$\alpha_{\text{conv}}$ &0.98 &0.98 &1.96 &1.96
\end{tabular}

 \caption{\label{tab.press100} Comparison of functional values  
 for the highly stretched 
 domain $\Omega_{100}$. For the isotropic LPS variants, the direct solver could not solve the linear system on the finest mesh. 
The values of the velocity norms for the different stabilisations
 were identical up to the third digit in all cases the solver converged. The convergence order has been estimated as in Table~\ref{tab.press}.}
\end{table}

\subsection{Examples on an alveolar sac geometry}
\label{sec.numsac}

In this section we consider an alveolar sac geometry consisting of 5 alveoli with 6 
connections $\Gamma_{bl}(t)$ to the 
cardiovascular system, inspired by Figure~\ref{fig:sac}. In average, 
an alveolar sac grows and shrinks by about 9 \% compared to the intermediate state~\cite{Sznitman2013}. 
We use the domain map 
\begin{align}\label{dommap}
x(t)& = T_1(\hat{x}) = \begin{cases}
                       \;\hat{x}(1-a \cos(0.4\pi t)), \quad &\hat{x}\geq 0,\\
                       \;\hat{x},  \quad &\hat{x}\leq 0,
                      \end{cases}\qquad
y(t)=\hat{y},
\end{align}
with $a=0.09$ describing a sinusoidal movement in horizontal direction. Accordingly,
the domain velocity is given by $v^{\text{dom}}=\partial_t \hat{T} =0.4 a \pi \sin(0.4\pi t) [\hat{x}]_+$.

We apply the discretisation techniques and parameters introduced above in combination with 
anisotropic LPS stabilisation. For time discretisation, we use the backward Euler 
time-stepping scheme with time step $\delta t=0.05$. The Nitsche parameter is chosen $\gamma_0=10$
and as initial values we use $v^0=v^{\text{dom}}(0)$ and $c^0 := c_{\text{ext}} = 0.04302$. 

\paragraph{Influence of the boundary conditions}

Some first results using the classical boundary conditions \eqref{DirichletNeumann}
in combination with Nitsche's method on $\Gamma_{io}$ 
are shown in Figure~\ref{fig.shortduct}. Here, we have added a short alveolar duct of length $l=0.6$mm on the left-hand side and chose
$c_{\text{ext}}=0.04302$ as Dirichlet value, which is an approximation for the average CO$_2$ concentration in the interior of the lung~\cite{Hlastala1972}.
During inspiration (top left)
the CO$_2$ concentration takes values ranging from around 0.056 to 0.06 in the alveolar sac. This changes quickly, when the boundary condition for 
$c$ changes to a Neumann condition at time $t=2.5$s (top right). At time $t=2.6$s, the concentration $c$ in the alveolar sac is already almost uniformly $c\approx 0.06$, which is
the Dirichlet value prescribed at the channels $\Gamma_{bl}(t)$, due to the large diffusion coefficient $D$.

\begin{figure}[bt]
\centering
\includegraphics[width=0.45\textwidth]{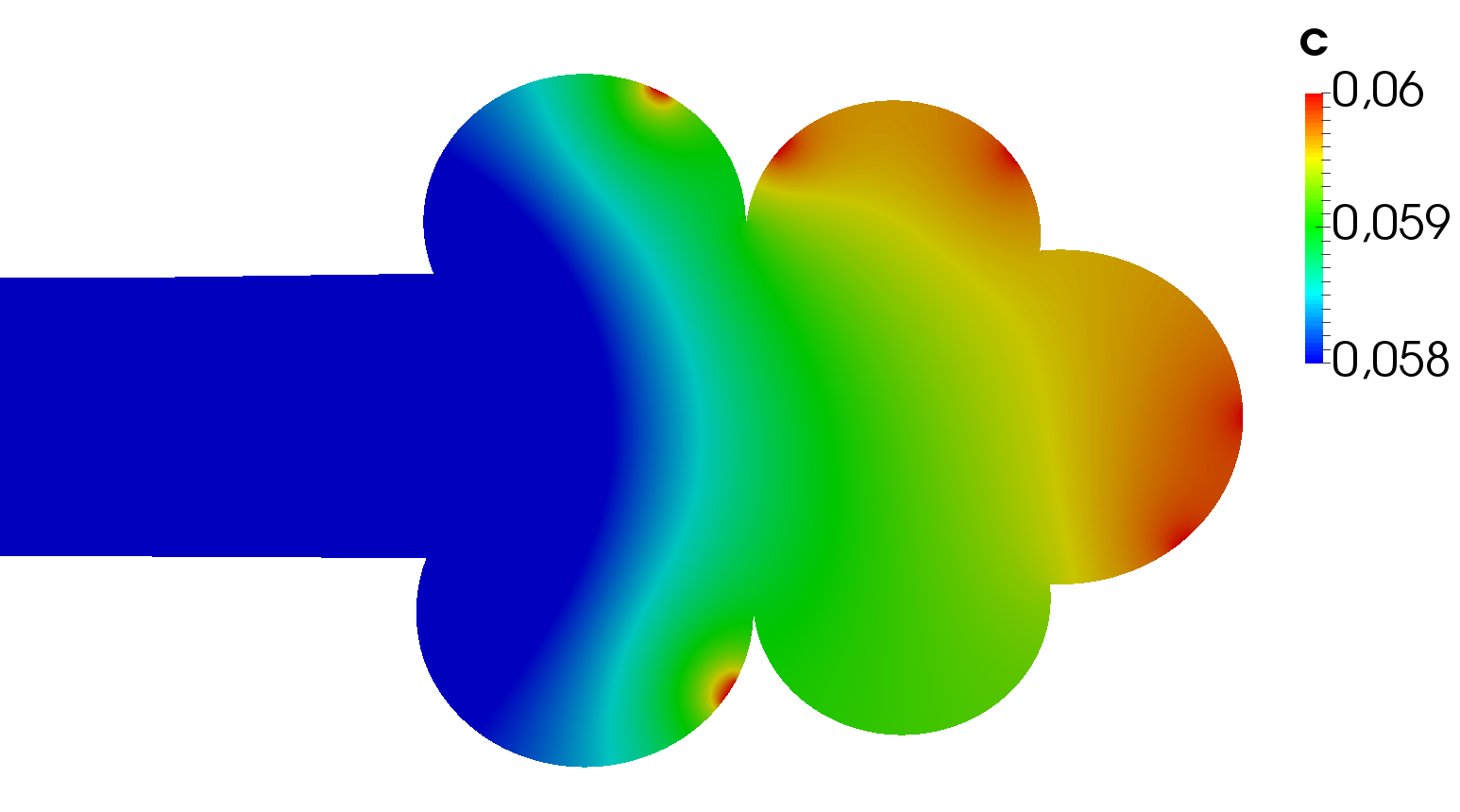}
\includegraphics[width=0.45\textwidth]{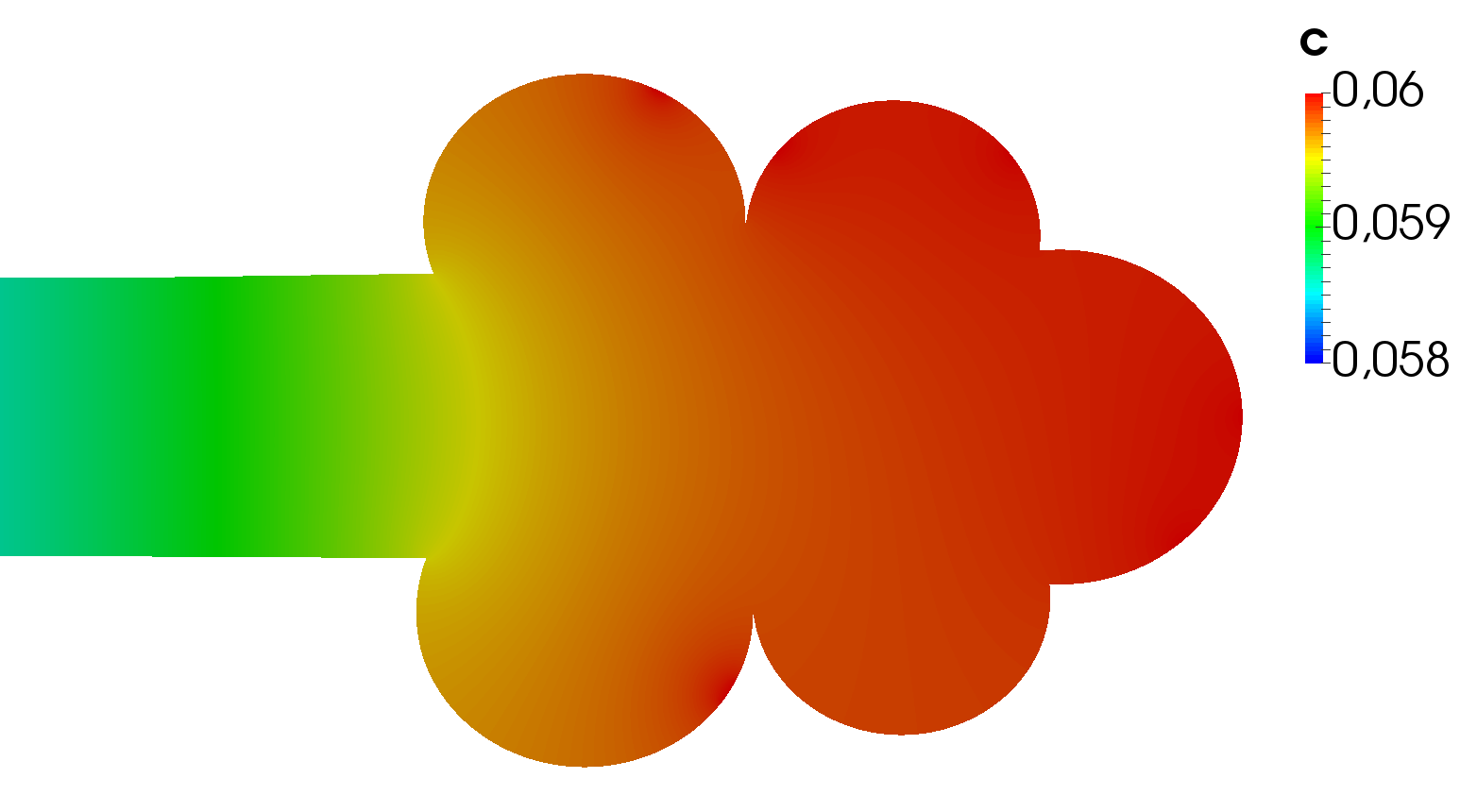}
\includegraphics[width=0.45\textwidth]{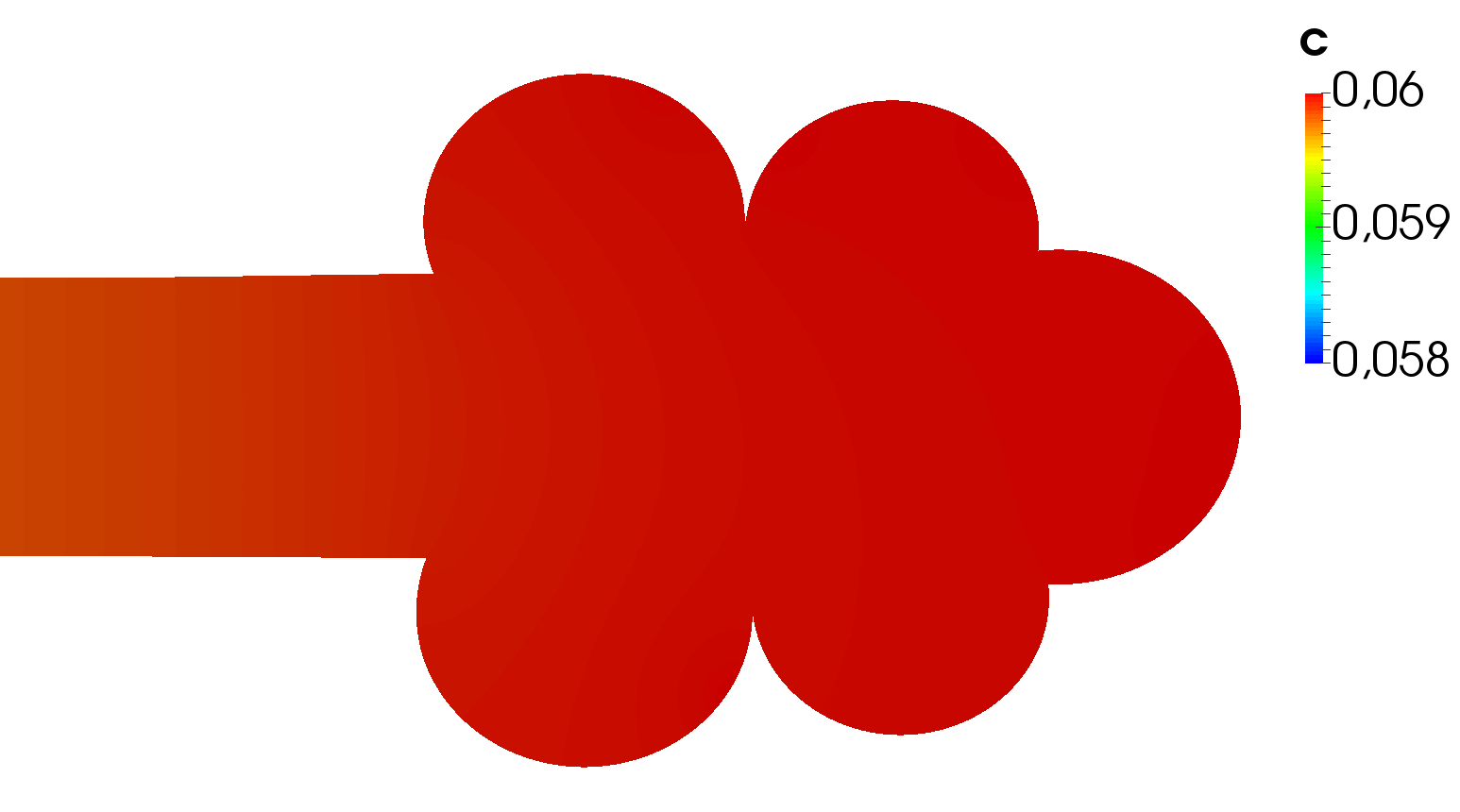}
 \caption{\label{fig.shortduct} CO$_2$ concentrations $c$ at times $t=2.4$s (top left), $t=2.5$s (top right) and $t=2.6s$ (bottom) for an alveolar sac consisting 
 of 5 alveoli with 6 permeable channels to the cardiovascular system. 
  A (relatively) short alveolar duct has been added on the left end of the domain.
 After 2.5s the boundary condition changes from inflow to outflow, which has a large influence on the results.
 At $t=2.4s$ (top) the concentration is maximal (red colour), where the boundary parts $\Gamma_{bl}$ are located.
}
\end{figure}

In fact the boundary condition for the concentration $c$ imposed on $\Gamma_{io}$ has a strong influence on the CO$_2$ concentration in $\Omega(t)$. 
The practical problem is the imposition of the external concentration $c_{\text{ext}}$ in the alveolar duct, especially
right after the expiration period ($t=5$s), when a relatively large concentration of CO$_2$ has just
left through $\Gamma_{io}$.
As a work-around, we consider longer artificial alveolar ducts of length $l=2mm$ and $l=10mm$, respectively and suppose that the exterior concentration is 
$c_{\text{ext}}=0.04302$ after this extension.
We compare the results for the three alveolar ducts in Figure~\ref{fig.CO2}.

\begin{figure}[bt]
\centering
\begin{minipage}{0.47\textwidth}
\includegraphics[width=\textwidth]{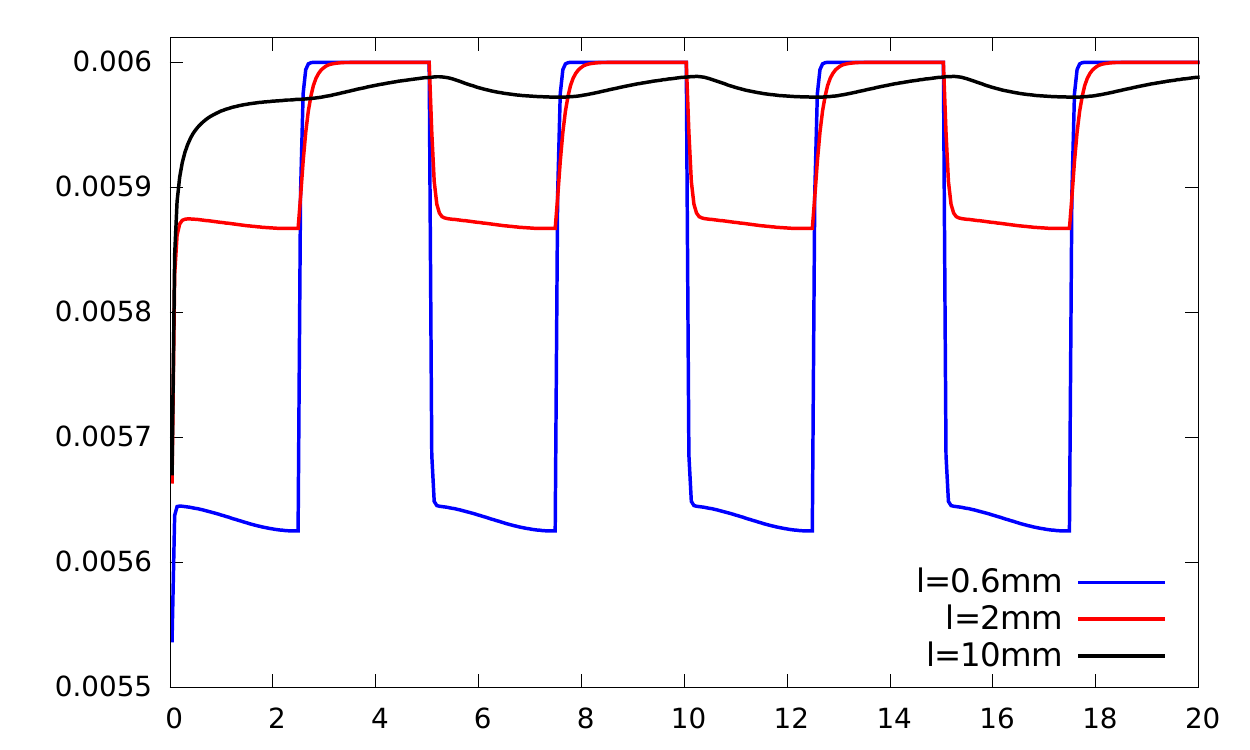}
\end{minipage}
\hspace{-0.4cm}
\begin{minipage}{0.47\textwidth}
\includegraphics[width=\textwidth]{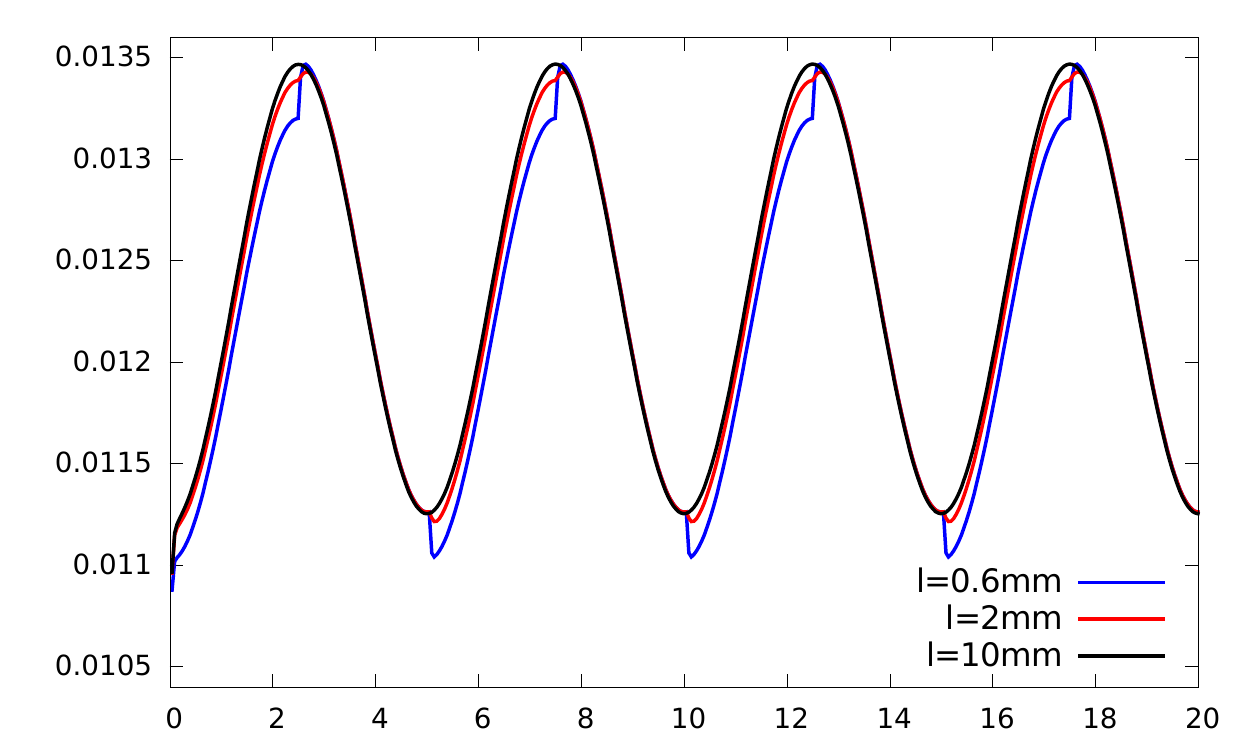}
\end{minipage}

 \caption{\label{fig.CO2} Comparison of the average concentration $J_{\Gamma_0}$ of CO$_2$ on the vertical entrance line $\Gamma_0$ (\textit{left}) 
 and of the total amount $J_{\Omega}$ of CO$_2$ in the 
 alveolar sac (\textit{right}) over time depending on the length of the artificial alveolar duct using the classical boundary conditions and Nitsche's method.}
\end{figure}

In the left plot of Figure~\ref{fig.CO2}, we show the average concentration of CO$_2$ on the vertical line $\Gamma_0$
between the alveolar sac and the alveolar duct
\begin{align*}
 J_{\Gamma_0}(c_h):= \frac{1}{|\Gamma_0|} \int_{\Gamma_0} c_h \, do, \qquad \Gamma_0:=\{(x,y)\in \Omega\, | \, x=0 \}. 
\end{align*}
Using the short alveolar duct ($l=0.6mm$), this value jumps between a value between 0.056 and 0.057 during inflow to a value of 0.06 during outflow. 
The reason is that in the outflow period, the concentration is
almost constant in the alveolar sac due to the large diffusion (see Figure~\ref{fig.shortduct}, bottom). In the inflow period, on the other hand, it varies between 
$0.04302$ on $\Gamma_{io}$ and $0.06$. 
These jumps can also be observed in the total amount of 
CO$_2$ in the alveolar sac $\Omega_a$
\begin{align*}
 J_{\Omega}(c_h) =\int_{\Omega_0(t)} c_h \, dx = \int_{\hat{\Omega}_0} \hat{J} \hat{c}_h \, d\hat{x}, \qquad \Omega_0:=\{(x,y)\in \Omega\, | \, x\geq0 \},
\end{align*}
see Figure~\ref{fig.CO2}, right plot. A similar behaviour, but with smaller jumps can be observed for the duct with length $l=2mm$. Here the average concentration on the 
line $\Gamma_0$ jumps between $J_{\Gamma_0}(c_h)\approx 0.0588$ and $0.06$.
For the longest alveolar duct ($l=10mm$), we observe a smooth behaviour of both functionals. 
The average value on $\Gamma_0$ oscillates here smoothly between 0.0597 and 0.0599. In conclusion, we see that the CO$_2$ concentration depends strongly on the boundary condition
and the length of the alveolar duct. It is difficult to guess, how long the artificial duct has to be to get realistic results.

In Figure~\ref{fig.artificial} we show the same functionals using the artificial boundary conditions. To see that this boundary condition is (almost) independent of the position,
at which the alveolar duct is cut, we compare results for ducts of length $l=0.1mm$, $0.6mm$ and $2mm$. First, we note that the curves for the total amount of CO$_2$ $J_{\Omega}$
are almost identical. The curves for the concentration on the line $\Gamma_0$ look very similar as well, especially for the smaller alveolar ducts. The small deviations are due to 
the fact, that the exact ``transparent'' boundary conditions are only approximated. Comparing the values with the results for the classical boundary conditions, 
the curves are much smoother with values varying between $0.05998$ and $0.06$. The larger values during inspiration are 
due to the fact that the large diffusion is also considered in the case of inflow.

\begin{figure}[bt]
\centering
\begin{minipage}{0.47\textwidth}
\includegraphics[width=\textwidth]{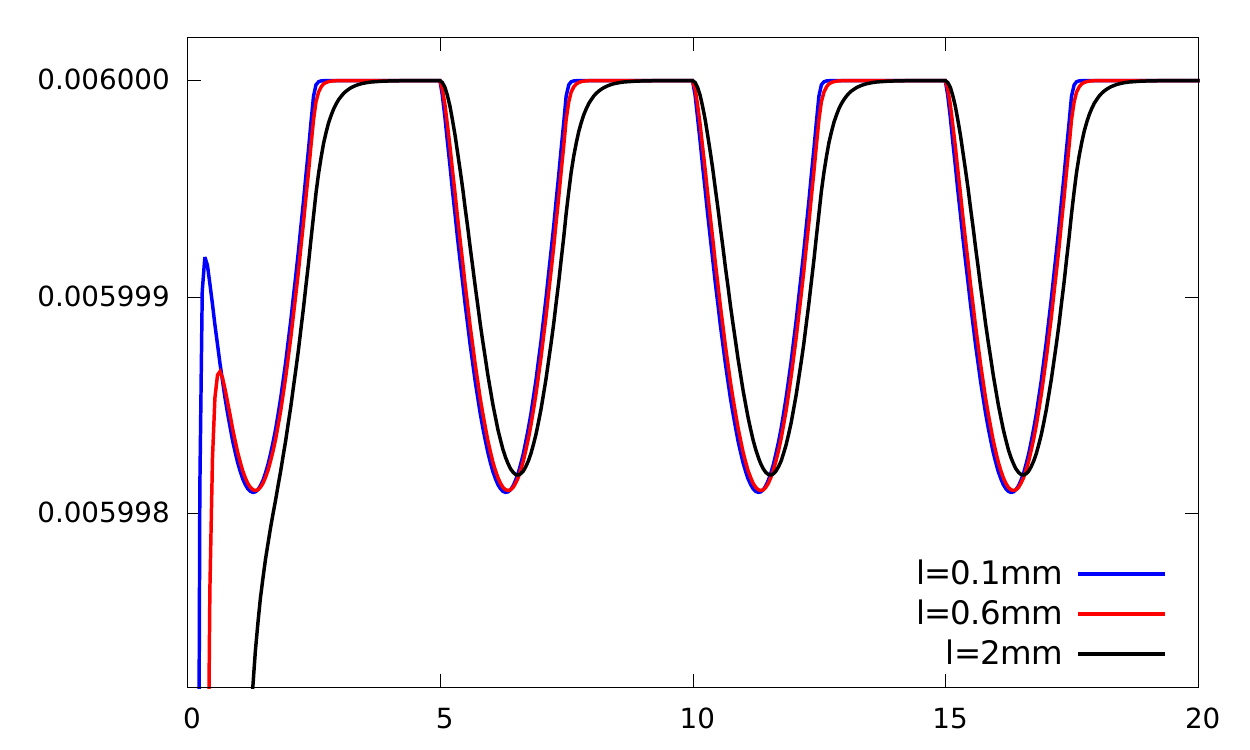}
\end{minipage}
\hspace{-0.4cm}
\begin{minipage}{0.47\textwidth}
\includegraphics[width=\textwidth]{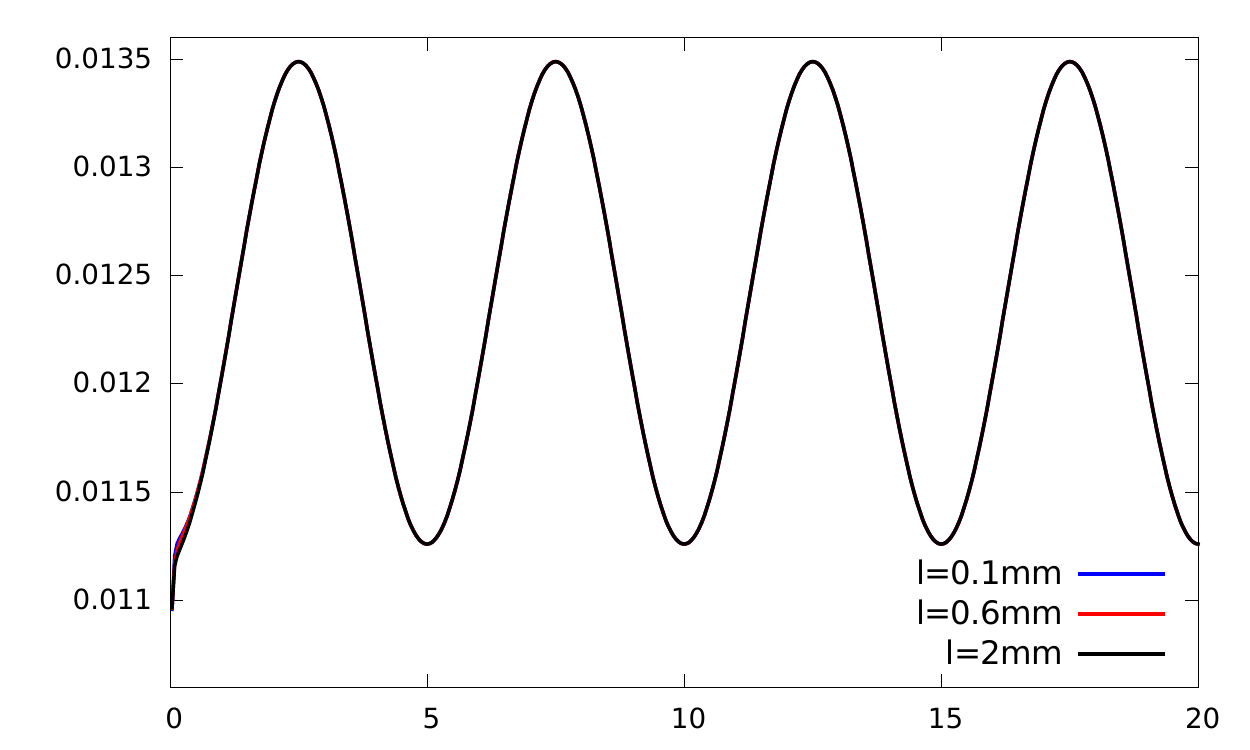}
\end{minipage}

 \caption{\label{fig.artificial} Comparison of the average concentration $J_{\Gamma_0}$ of CO$_2$ on the vertical entrance line $\Gamma_0$ (\textit{left}) 
 and of the total amount $J_{\Omega}$ of CO$_2$ in the 
 alveolar sac (\textit{right}) over time depending on the length of the artificial alveolar duct using the artificial boundary conditions.}
\end{figure}

Finally we would like to remark that in both cases the length of the alveolar duct does not have a significant influence on the fluid quantities: 
While the velocity fields in the alveolar 
sac are almost identical,
the pressure varies by an additive constant (as the do-nothing condition on its left end $\Gamma_{io}$ implies $\int_{\Gamma_{io}} p_h \, do =0$).

\paragraph{Influence of the domain movement}
Next, we want to investigate the influence of the domain movement on the gas concentrations and on the fluid forces. 
Therefore, we repeat the same calculation on a fixed domain, i.e.$\,$ setting $\hat{T}=$id, but keeping the Dirichlet values $v=v^{\text{dom}}$
as above.
In Figure~\ref{fig.fixed}, we compare the total amount $J_{\Omega}$ of CO$_2$ in the  
alveolar sac, as well as the wall shear stress on $\Gamma_{bl}(t)$
\begin{align*}
 J_{\sigma,\Gamma_{bl}} := \int_{\Gamma_{bl}(t)} \sigma n e_1 \, do,
\end{align*}
which is important when studying pathologies such as pulmonary emphysema. 
The amount of CO$_2$ shows large differences. While it varies by almost 20 \% over time on the moving domain, the variation is less than 0.1\% on a 
fixed domain. Note, however, that most of this variation comes from the change of volume of the domain itself and for example the amount of CO$_2$ over a fixed line,
e.g. $\Gamma_0$, shows much less variations. On the other hand, the wall stresses
differ considerably as well, its maximum being around 10\% higher when considering an expanding alveolar sac.

\begin{figure}[bt]
\centering
\begin{minipage}{0.47\textwidth}
 \includegraphics[width=\textwidth]{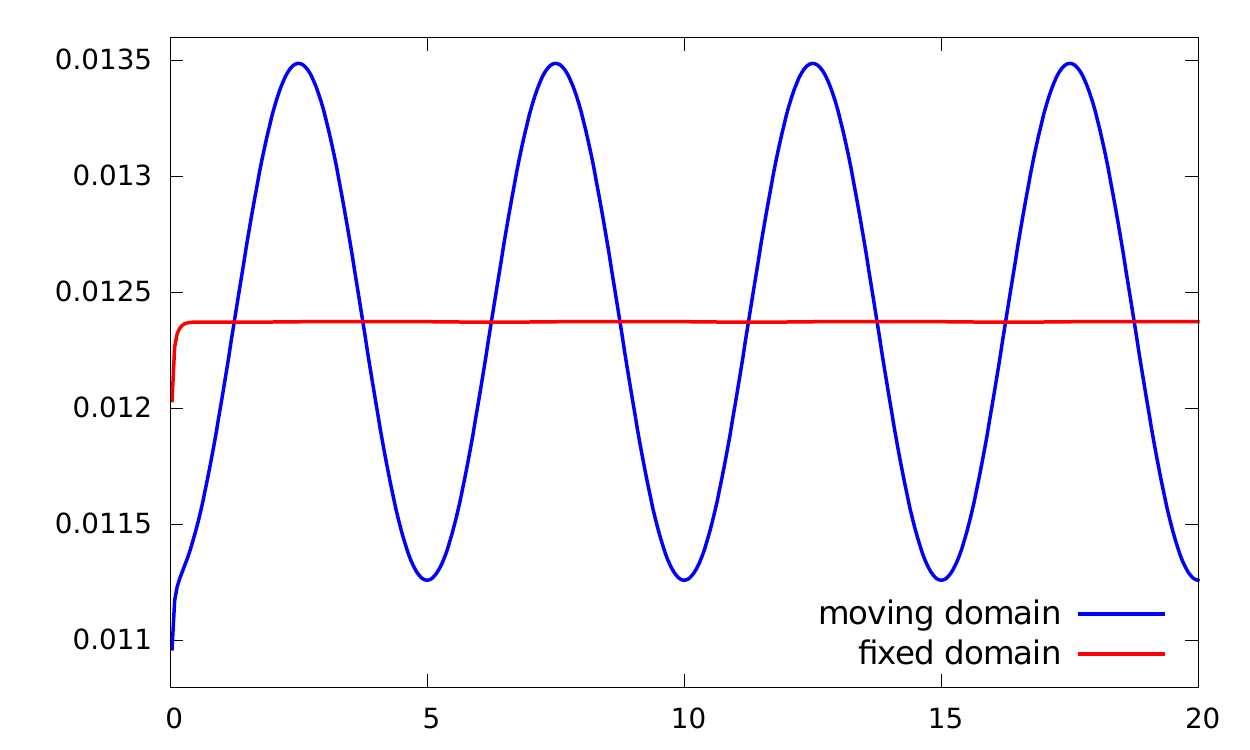}
 \end{minipage}
 \hspace{-0.4cm}
 \begin{minipage}{0.47\textwidth}
  \includegraphics[width=\textwidth]{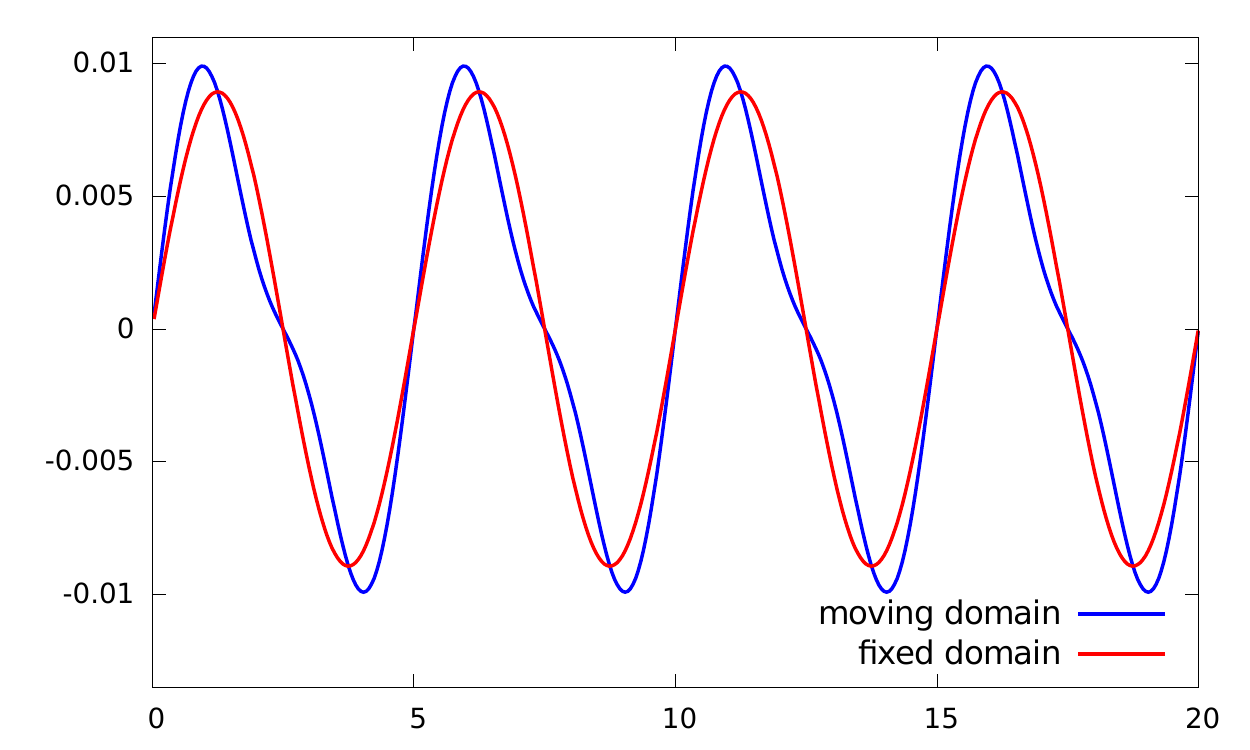}
  \end{minipage}
 \caption{\label{fig.fixed} Comparison of simulation results on a moving domain $\Omega(t)$ and on a fixed domain $\Omega$
  using artificial boundary conditions and a short alveolar duct of length $l=0.6mm$.
 \textit{Left}: Amount $J_{\Omega}$ of CO$_2$ within the alveolar sac over time.
 \textit{Right}: Wall stress $J_{\sigma,\Gamma_{bl}}$ over time.}
\end{figure}

\paragraph{Convergence analysis}

We close this section by analysing convergence properties of some functionals in space in Table~\ref{tab.conv}. We show values of
the vorticity functional
\begin{align*}
 J_{\text{vort}}(v_h) &= \int_{\Omega(t)} \left(\partial_y v_{h,1} - \partial_x v_{h,2}\right)^2 \, dx,
 \end{align*}
 the $L^2$-norm of the pressure as well as the functional $J_{\Omega}(c_h)$ 
at time $t=8.75$s on different meshes and for both pressure stabilisation techniques, while using the 
artificial boundary conditions. This instant of time within the second expiration period has been chosen, 
as both the vorticity and the pressure functional attain a maximum at $t=8.75$s.
In Table~\ref{tab.conv}, we observe a nice convergence behaviour for all three functionals and for both pressure stabilisations. 
 We have calculated an extrapolated value $j_e$ and an estimated convergence rate $\alpha_{\text{conv}}$ by fitting the function 
 $f(h) = j_e + ch^{\alpha_{\text{conv}}}$ with parameters $j_e, c$ and $\alpha_{\text{conv}}$ against the functional values on the three finer grids.
The estimated convergence orders are for all three functionals significantly higher than what can be expected for $Q_1$ finite elements. We would like 
to remark, however, that these might be slightly over-estimated, as the reference values are extrapolated from the same functional values 
we compute the errors for.

Comparing the results for the two pressure stabilisation techniques, we see that the pressure converges faster for the LPS stabilisation, while the opposite
holds true for the vorticity functional $J_{\text{vort}}(v_h)$. The reason for the faster convergence of $J_{\text{vort}}(v_h)$ for the edge-oriented stabilisation
might be the consistency of this method, which has the effect that the velocity error is better separated from the error in the pressure. The values of $J_{\Omega}(c_h)$
are nearly independent (i.e.$\,$identical in the first 10 digits) of the chosen pressure stabilisation.

\begin{table}[t!]
\centering
\begin{tabular}{r|cc|cc}
&\multicolumn{2}{c|}{$S_{\text{LPS}}$} &\multicolumn{2}{c}{$S_e$}\\
\#cells & $J_{\text{vort}}(v_h)$ &$\big|J_{\text{vort}}(v_h) - j_{\text{e}}\big|$ & $J_{\text{vort}}(v_h)$ &$\big|J_{\text{vort}}(v_h) - j_{\text{e}}\big|$\\ 
\hline
 656  &\;$2.71774\cdot10^{-2}$\; &$1.52\cdot 10^{-3}$ &$2.78808\cdot10^{-2}$\;  &$7.96\cdot 10^{-4}$\\
2624  &\;$2.84055\cdot10^{-2}$\; &$2.87\cdot 10^{-4}$ &$2.85460\cdot10^{-2}$\;  &$1.31\cdot 10^{-4}$\\
10496 &\;$2.86756\cdot10^{-2}$\; &$1.68\cdot 10^{-5}$ &$2.86736\cdot10^{-2}$\;  &$3.53\cdot 10^{-6}$\\
41984 &\;$2.86915\cdot10^{-2}$\; &$9.86\cdot 10^{-7}$ &$2.86770\cdot10^{-2}$\;  &$9.51\cdot 10^{-8}$\\
\hline 
 $j_\text{e}/\alpha_{\text{conv}}$&$2.86925\cdot10^{-2}$ &4.09 &$2.86771\cdot 10^{-2}$ &5.21
\end{tabular}
\vspace{0.3cm}

\begin{tabular}{r|cc|cc}
&\multicolumn{2}{c|}{$S_{\text{LPS}}$} &\multicolumn{2}{c}{$S_e$}\\
\#cells &$\|p_h\|_{\Omega_0}$ &$\big|\|p_h\|_{\Omega_0}-j_{\text{e}}\big|$ &$\|p_h\|_{\Omega_0}$ &$\big|\|p_h\|_{\Omega_0}-j_{\text{e}}\big|$ 
\\
\hline
 656  &$2.08528\cdot10^{-4}$\;  &$1.28\cdot 10^{-6}$   &$2.09178\cdot 10^{-4}$ & $1.90\cdot 10^{-6}$\\
2624   &$2.07492\cdot10^{-4}$\;  &$2.45\cdot 10^{-7}$  &$2.07824\cdot 10^{-4}$ & $5.42\cdot 10^{-7}$\\
10496  &$2.07283\cdot10^{-4}$\;  &$3.64\cdot 10^{-8}$  &$2.07448\cdot 10^{-4}$ & $1.66\cdot 10^{-7}$\\
41984  &$2.07252\cdot10^{-4}$\;  &$5.41\cdot 10^{-9}$  &$2.07333\cdot 10^{-4}$ & $5.10\cdot 10^{-8}$\\
\hline 
 $j_\text{e}/\alpha_{\text{conv}}$&$2.07247\cdot 10^{-4}$ &2.75 &$2.07282\cdot 10^{-4}$ &1.70\\
\end{tabular}
\vspace{0.3cm}

\begin{tabular}{r|cc}
&\multicolumn{2}{c}{$S_{\text{LPS}}/S_e$}\\
\#cells &$J_{\Omega}(c_h)$ &$\big|J_{\Omega}(c_h) - j_{\text{e}}\big|$\\
\hline
656   &$1.23987\cdot10^{-2}$ &$8.44\cdot 10^{-6}$\\
2624  &$1.23668\cdot10^{-2}$ &$2.35\cdot 10^{-5}$\\
10496 &$1.23884\cdot10^{-2}$ &$1.88\cdot 10^{-6}$\\
41984 &$1.23901\cdot10^{-2}$ &$1.51\cdot 10^{-7}$\\
\hline
 $j_\text{e}/\alpha_{\text{conv}}$ &$1.23903\cdot 10^{-2}$ &3.64 
\end{tabular}
 \caption{\label{tab.conv} Functional values and estimated errors for the functionals $J_{\text{vort}}(v_h), \|p_h\|_{L^2(\Omega_0)}$ and $J_{\Omega}(c_h)$ on different 
 meshes using anisotropic LPS and edge stabilisation. 
 The functional values are compared against an extrapolated reference value $j_e$ and a convergence rate $\alpha_{\text{conv}}$
 is estimated based on the values on the three finer grids.
 The values of $J_{\Omega}(c_h)$ were identical in the first 10 digits for both pressure stabilisation techniques. In the last row of each table, we show 
 for each functional the
 extrapolated value $j_e$ in the first column and the estimated convergence order $\alpha_{\text{conv}}$ in the second column.
 }
\end{table}

\section{Conclusion and outlook}
\label{sec:conclusion}

We have presented a numerical framework for the simulation of gas flow at small scale in the alveolar sacs of the human lung, including the
gas exchange with the cardiovascular system. We have considered that the alveolar sacs are deformable and have introduced numerical stabilisation terms, that
are able to handle even extreme domain deformations. Moreover, we have shown that the choice of boundary conditions has a significant impact on the results
and that the classical in- and outflow boundary conditions might not be a good choice on the alveolar scale.

The present work can be seen as a first step to model gas flow and gas exchange at an alveolar scale in the human lung.
A complete model would have to consider both the small alveolar scale as well as the lung in total at a larger scale.

A model for the alveolar scale is needed to study different pathologies, for example pulmonary emphysema, where foreign substances, e.g. tobacco, might block
the membrane towards the cardio-vascular system. Moreover, the properties of the alveolar wall itself can be altered by foreign substances, which can reduce the
deformability of the membrane. A full fluid-structure interaction problem, including the interaction with the thin alveolar wall and
possibly a further fluid or solid model for the space around the lung,
has to be considered. For a first work in a similar direction, we refer to Dailey \& Ghadali~\cite{DaileyGhadali}.

\begin{acknowledgement}
We gratefully acknowledge financial support by CONCYTEC Peru within the program 
``Programa nacional de innovaci\'on para la competitividad y productividad'' (PNICP, 361-PNICP-PIBA-2014)
 as well as travel support by the Heidelberg Graduate School
of Mathematical and Computational Methods for the Sciences (HGS MathComp). The second author was supported 
 by the DFG Research Scholarship FR3935/1-1.
\end{acknowledgement}

\bibliographystyle{plainnat}


\end{document}